\documentclass[12pt,a4paper]{article}

\usepackage[english]{babel}
\usepackage{amsfonts, amsbsy, amssymb,amsmath,amsthm,graphicx,array}
\usepackage{graphics}

\newtheorem{theorem}{Theorem}
\newtheorem{nn_theorem}{Theorem}
\newtheorem{lemma}{Lemma}
\newtheorem{proposition}{Proposition}

\theoremstyle{definition}
\newtheorem{definition}{Definition}
\theoremstyle{remark}
\newtheorem{remark}{Remark}
\newtheorem{nn_remark}{Remark}

\newcommand{\st}{\raisebox{-3pt}{\LARGE $*$}}
\newcommand{\wG}{\widetilde G}
\newcommand{\hG}{\widehat\Gamma}

%\title{A new proof of Vassiliev's conjecture}

%\author{
%Igor Mikhailovich Nikonov\\
%{\em\footnotesize Department of Mechanics and Mathematics,}\\
%{\em\footnotesize Moscow State University, Russia}\\
%{\em\footnotesize nikonov@mech.math.msu.su}}

\begin{document}

\markboth{Igor Nikonov}
{A new proof of Vassiliev's conjecture}

%%%%%%%%%%%%%%%%%%%%% Publisher's Area please ignore %%%%%%%%%%%%%%
%\catchline{}{}{}{}{}
%%%%%%%%%%%%%%%%%%%%%%%%%%%%%%%%%%%%%%%%%%%%%%%%%%%%%%%%%%%%%%%%%%%

\title{A new proof of Vassiliev's conjecture}

\author{Igor Nikonov\\%}
%
%\address{
Department of Mechanics and Mathematics,\\
Moscow State University, Russia\\
%Faculty of Management, National Research University Higher School of Economics, Russia\\
nikonov@mech.math.msu.su
%Full affiliations\footnote{Typeset
%affiliation and mailing addresses in 8pt Times italic.} \\
%and mailing addresses
}

 \maketitle

\abstract{
 In the paper~\cite{Vas} on finite type invariants of self-intersecting curves,
V.A. Vassiliev conjectured a criterion of planarity of framed four-valent
graphs, i.e. $4$-graphs with an opposite edge structure at each
vertex. The conjecture was proved by V.O. Manturov~\cite{Man}.
%, who used the $d$-diagram technics he developed.
We give here another proof of Vassiliev's planarity criterion of
framed four-valent graphs (and more generally, (even) $\st$-graphs),
which is based on Pontryagin--Kuratowski theorem.
 }

%\keywords{\st-graph, planarity criterion, Vassiliev's conjecture.}
%\ccode{05C10, 57C15, 57C27}

%%%%%%%%%%%%%%%%%%%%%%%%%%%%%%%%%%%%%%%%%%%%%%%%%%%%%%%%%%%%%%%%%%%%
\section{Introduction}

\begin{definition}\label{def:star_graph}
A {\em\st-graph} is a graph for which at each vertex an unoriented
cyclic order of the outgoing half-edges is given. The {\em
unoriented cyclic order} can be determined by a bijecton from the
half-edges to the vertices of a cycle graph. Half-edges which are
mapped to adjacent vertices are called {\em adjacent}.

We call a \st-graph {\em even} if each vertex of the graph has even degree.
%$\st$-graph, adjacent edges, even \st-graph
\end{definition}

\begin{remark}
If all the vertices of a \st-graph are of order $4$ then we have a
framed four-valent graph considered in~\cite{Man,Vas}.
\end{remark}

\begin{definition}
An {\em embedding} of a \st-graph $G$ into a surface $S$ is an
embedding of $G$ (as an ordinary graph) into $S$ which is compatible
with \st-structure, i.e. for any vertex $v$ of $G$ the cyclic order
on the half-edges at $v$ induced by the embedding must coincide with
the cyclic order of the \st-graph at $v$.

A \st-graph is called {\em planar} if there is an embedding of it into the plane $\mathbb R^2$.
\end{definition}

\begin{definition}
Let $v$ be a vertex of a \st-graph $G$. Let $\gamma_1$ and
$\gamma_2$ be paths in $G$ which have no common edges. Assume that
$\gamma_1$ and $\gamma_2$ go through the vertex $v$, and $e, e'$
(correspondingly, $f,f'$) are the half-edges of the path $\gamma_1$
(correspondingly, $\gamma_2$) incident to $v$. We say that
$\gamma_1$ and $\gamma_2$ {\em intersect transversely} at $v$ if the
pairs of half-edges $e,e'$ and $f,f'$ alternate in the unoriented
cyclic order at $v$ (see fig.~\ref{fig:transversal_intersection}).

 \begin{figure}
  \centering
  \begin{tabular}{cc}
    \includegraphics[width=0.25\textwidth]{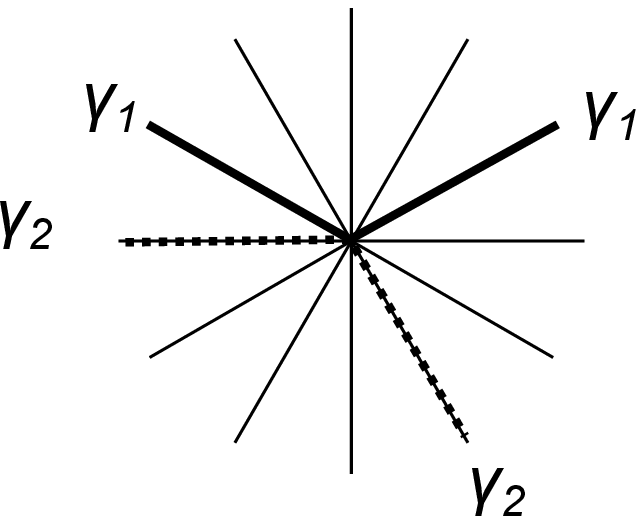} &
    \includegraphics[width=0.25\textwidth]{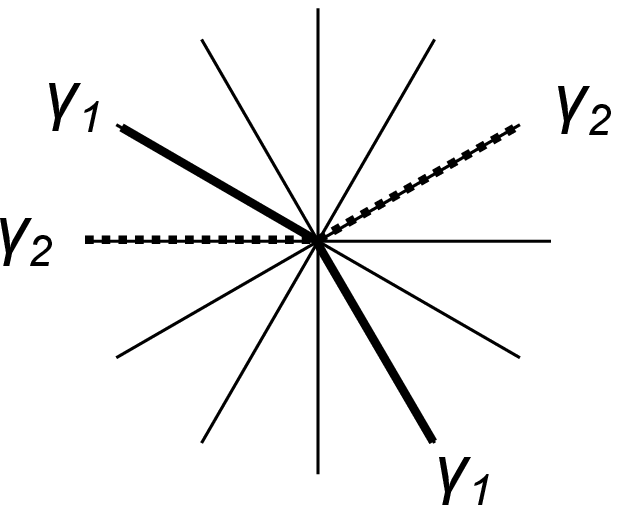} \\
    nontransversal intersection & transversal intersection
  \end{tabular}
  \caption{Intersection of paths at a vertex}\label{fig:transversal_intersection}
 \end{figure}

%transversal intersection
\end{definition}

\begin{remark}
The paths $\gamma_1$ and $\gamma_2$ can pass several times through $v$ and have several transversal intersection
at the vertex $v$. In order to distinguish intersection points we can consider the following construction. Let
$e_{2i-1},e_{2i},\ i=1,\dots,k,$ be the pairs of consecutive (half)edges of the paths $\gamma_1$ which are
incident to $v$. Analogously, we denote the edges of the path $\gamma_2$ incident to $v$ as $f_{2j-1},f_{2j},\
i=j,\dots,l$. The unoriented cyclic order at the vertex $v$ defines a bijection from half-edges to vertices of a
cyclic graph. Draw this cyclic graph on the plane as a circle with marked points on it. For any $i=1,\dots,k$
($j=,\dots,l$) connect the points that correspond to the edges $e_{2i-1}$ and $e_{2i}$ (edges $f_{2j-1}$ and
$f_{2j}$) with a line segment. We shall call the obtained graph as {\em vertex chord diagram} of $v$ (see
fig.~\ref{fig:vertex_chord_diagram}). Then the transversal intersection at the vertex correspond to
intersections of the chords. For example, the paths $\gamma_1$ and $\gamma_2$ in
fig.~\ref{fig:vertex_chord_diagram} have one transversal intersection (and there is a transversal
self-intersection of the path $\gamma_2$).

 \begin{figure}
  \centering\includegraphics[width=0.6\textwidth]{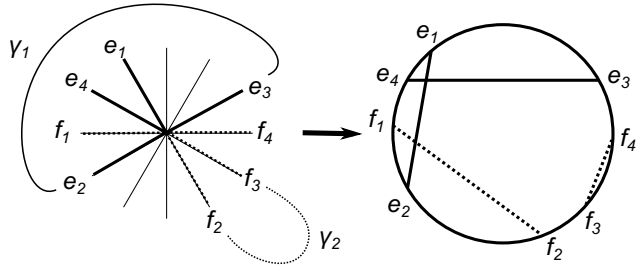}
  \caption{Vertex chord diagram}\label{fig:vertex_chord_diagram}
 \end{figure}

%Vertex chord diagram
% several intersections at a vertex
% picture
\end{remark}

Now we can formulate the main theorem of the paper.
\begin{theorem}[Vassiliev's planarity criterion for even \st-graphs]\label{thm:main_theorem}
An even $\ast$-graph is planar if and only if it does not contain a pair of
cycles without common edges and with exactly one transversal
intersection.
\end{theorem}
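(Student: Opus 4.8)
The plan is to prove the two implications separately: the ``only if'' part is an elementary Jordan‑curve argument, while for the ``if'' part I pass to an auxiliary \emph{ordinary} graph and invoke the Pontryagin--Kuratowski theorem. For necessity, fix an embedding of the even $\ast$-graph $G$ in $\mathbb R^2$ compatible with the $\ast$-structure, and let $\gamma_1,\gamma_2$ be edge-disjoint cycles of $G$. Then $\gamma_1$ is a simple closed curve, hence separates $\mathbb R^2$, and since the two cycles share no edge they can meet only at vertices of $G$. At a common vertex the two half-edges of $\gamma_1$ and the two of $\gamma_2$ occur around that vertex in the prescribed cyclic order; if they alternate (the transversal case) the arc of $\gamma_2$ through the vertex passes from one component of $\mathbb R^2\setminus\gamma_1$ to the other, and if they do not alternate $\gamma_2$ stays locally on one side. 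Travelling once around the closed curve $\gamma_2$ one changes sides of $\gamma_1$ exactly at the transversal intersections, so their number is even, in particular never equal to $1$.

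For the converse I replace each vertex $v$ of $G$, of degree $2n$, by a cycle $C_v$ on $2n$ vertices and reattach the half-edges formerly incident to $v$ to the vertices of $C_v$ following the cyclic order at $v$; denote the resulting ordinary graph by $\widehat G$. Then $\widehat G$ is $3$-regular, and contracting every $C_v$ back to a point recovers the graph underlying $G$. I claim $G$ is planar if and only if $\widehat G$ is. One direction is immediate: shrink each $C_v$ into a tiny disc around $v$ in a planar drawing of $G$. For the other, in a planar drawing of $\widehat G$ one wants to contract the cycles $C_v$, which is legitimate as soon as all half-edges attached to $C_v$ leave it on the same side; this is automatic when $G$ is $2$-connected, and the general case follows by the block decomposition (a $\ast$-graph is planar iff all its blocks are, and at a cut vertex the half-edges belonging to the various blocks are forced to form nested arcs in the cyclic order once the blocks are planar). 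Hence it remains to show: if $\widehat G$ is non-planar, then $G$ contains two edge-disjoint cycles with exactly one transversal intersection.

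Suppose $\widehat G$ is non-planar. By the Pontryagin--Kuratowski theorem it contains a subdivision of $K_5$ or of $K_{3,3}$; the former is impossible, since its branch vertices would have degree $4$ while $\widehat G$ is $3$-regular, so $\widehat G$ contains a subdivision $H$ of $K_{3,3}$, whose six branch vertices are vertices of gadget cycles $C_v$ with all three incident edges (the two edges of $C_v$ and one half-edge of $G$) in $H$. To read off the required pair of cycles, choose in $K_{3,3}$ two cycles $Z_1,Z_2$ sharing exactly one edge $\varepsilon$ (for instance two quadrilaterals through a fixed edge), lift them through the subdivision to cycles $\widehat Z_1,\widehat Z_2$ of $\widehat G$ whose only common part is the arc $\widehat\varepsilon$ subdividing $\varepsilon$, and contract all the gadget cycles. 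If $\widehat\varepsilon$ lies inside a single gadget cycle $C_v$, then after the contraction the images $\gamma_1,\gamma_2$ of $\widehat Z_1,\widehat Z_2$ are edge-disjoint cycles of $G$ both passing through $v$, and the fact that $Z_1$ and $Z_2$ cross at $\varepsilon$ (their remaining edges at the two ends of $\varepsilon$ alternate there) turns into the statement that $\gamma_1$ and $\gamma_2$ meet transversally at $v$, and, by the parity argument of the first part applied inside the planar pieces of $\widehat G$, nowhere else.

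The step that is not routine — and the only place where the hypothesis that $G$ is \emph{even} is essential (for $3$-valent $\ast$-graphs the statement is false: $K_{3,3}$ with any $\ast$-structure is non-planar yet has no two edge-disjoint cycles at all) — is to arrange the above so that the shared arc $\widehat\varepsilon$ genuinely lies inside one gadget cycle and so that $\gamma_1,\gamma_2$ have exactly one transversal intersection. My plan is to pass first to a minimal non-planar even sub-$\ast$-graph $G_0$ of $G$, suppressing $2$-valent vertices so that every vertex of $G_0$ has degree $\ge 4$ and every gadget cycle of $\widehat{G_0}$ has even length $\ge 4$; then to argue that the $K_{3,3}$-subdivision in $\widehat{G_0}$ may be taken to use every half-edge; then to analyze how it meets each gadget cycle, namely in a disjoint union of arcs; and finally to run an Euler-type count on the \emph{even} cycles $C_v$ that forces the whole configuration down to the model case of a single even gadget carrying pairwise-crossing chords, in which the two crossing chords are precisely the desired pair $\gamma_1,\gamma_2$. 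Carrying out this last reduction cleanly — rather than the construction of $\widehat G$ or the invocation of Pontryagin--Kuratowski — is the main obstacle.
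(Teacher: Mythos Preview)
Your framework (pass to an auxiliary ordinary graph, apply Pontryagin--Kuratowski) coincides with the paper's, but two steps do not go through as written.

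First, the planarity equivalence for your $\widehat G$ is false without the central vertex. Take $G$ to be a single vertex of degree $4$ with two loops whose half-edges alternate in the cyclic order. This even $\ast$-graph is non-planar (the two loops are the forbidden pair of transversal cycles), yet your $\widehat G$ is the $4$-cycle plus its two diagonals, i.e.\ $K_4$, which is planar. Your appeal to $2$-connectivity and block decomposition does not rescue this: once loops or small blocks are allowed, the gadget cycle $C_v$ can be drawn with half-edges on both sides, and there is no way to flip them to one side. The paper's web graph keeps the original vertex as a central hub connected to every vertex of $C_v$; that extra vertex is exactly what forces a wrong cyclic order to produce a $K_5$ (Proposition~\ref{prop:planarity_equivalence_star_and web_graphs}), and in the example above one gets $K_5$ rather than $K_4$.

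Second, and more seriously, your reduction to the case ``$\widehat\varepsilon$ lies inside a single gadget'' is not a proof but a hope. The configuration you must confront is the one in which the six branch vertices of the $K_{3,3}$-subdivision lie in six \emph{different} gadget cycles; then no edge of $K_{3,3}$ lies inside a single gadget, no amount of rechoosing $Z_1,Z_2$ helps, and your ``Euler-type count on the even cycles $C_v$'' has nothing to bite on, since each $C_v$ sees only a single arc of the subdivision. This is precisely the paper's Case~$K_{3,3}$.f, and it is the unique place where evenness of $G$ is used: one deletes the projected $K_{3,3}$ from $G$, observes that the six branch vertices become the only odd-degree vertices of the complement, and uses parity to find an auxiliary path in $G\setminus\widehat\Gamma$ joining two branch vertices of opposite colour. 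That extra path, together with the $K_{3,3}$, is what produces the transversal pair. Your minimal-subgraph plan does not supply any analogue of this path, and without it the argument cannot close.
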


\begin{remark}
This criterion is not valid for non even \st-graphs, a counterexample is the graph $K_{3,3}$. This graph appears to be the only additional obstruction to planarity of an arbitrary \st-graph.
\end{remark}

\begin{theorem}[Planarity criterion for \st-graphs]\label{thm:main_theorem2}
An $\ast$-graph is not planar if and only if it contains a pair of cycles without common edges and with exactly one transversal intersection or contains a subgraph isomorphic to $K_{3,3}$.
\end{theorem}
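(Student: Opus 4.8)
The plan is to derive Theorem~\ref{thm:main_theorem2} from Theorem~\ref{thm:main_theorem} and the Pontryagin--Kuratowski theorem; the genuine content sits inside Theorem~\ref{thm:main_theorem}, so what follows is mostly bookkeeping. It is convenient to read ``contains a subgraph isomorphic to $K_{3,3}$'' as ``contains a subdivision of $K_{3,3}$'': the literal reading is too weak, as is shown by the graph obtained from $K_{3,3}$ by subdividing two independent edges and adding the edge joining the two new vertices. That graph is non-planar, has minimum and maximum degree $3$, hence admits no transversal intersection of edge-disjoint paths at all (such an intersection needs a vertex of degree at least $4$), and contains no literal copy of $K_{3,3}$.

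For the ``if'' direction: if $G$ contains a subdivision of $K_{3,3}$, then the underlying graph of $G$ is non-planar by Pontryagin--Kuratowski, so $G$ is not planar (a planar $*$-graph has planar underlying graph, and subgraphs of planar graphs are planar). If $G$ contains edge-disjoint cycles $\gamma_1,\gamma_2$ with exactly one transversal intersection, suppose $G$ has a plane embedding. Then $\gamma_1$ and $\gamma_2$ are closed curves in $\mathbb R^2$, so their intersection number modulo $2$ vanishes (since $H_1(\mathbb R^2)=0$); after a small generic perturbation each transversal intersection produces exactly one crossing and each non-transversal one produces none, so the number of transversal intersections must be even --- contradicting the hypothesis. (This is the same argument that proves the corresponding direction of Theorem~\ref{thm:main_theorem}, and it uses nothing about parities of degrees.)

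For the ``only if'' direction, let $G$ be non-planar. By the Pontryagin--Kuratowski theorem the underlying graph of $G$ contains a subgraph $H$ that is a subdivision of $K_5$ or of $K_{3,3}$. If $H$ is a subdivision of $K_{3,3}$, we are done. If $H$ is a subdivision of $K_5$, give $H$ the $*$-structure inherited from $G$, that is, at every vertex restrict the cyclic order of $G$ to the half-edges that remain in $H$. Then $H$ is an \emph{even} $*$-graph --- its branch vertices have degree $4$ and its subdivision vertices degree $2$ --- and it is non-planar, so Theorem~\ref{thm:main_theorem} applied to $H$ yields edge-disjoint cycles $\gamma_1,\gamma_2$ in $H$ with exactly one transversal intersection. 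This property transfers to $G$ unchanged: at any common vertex $v$ of the two cycles, whether two of the associated chords of the vertex chord diagram cross depends only on the cyclic order of the four half-edges involved, and that order is by construction the restriction of the cyclic order of $G$ at $v$; hence $\gamma_1,\gamma_2$ also have exactly one transversal intersection when regarded as cycles in $G$. This completes the argument.

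There is essentially no ``hard part'' left once Theorem~\ref{thm:main_theorem} is in hand; the only points needing a moment's care are the observation that a subdivision of $K_5$ is automatically an even $*$-graph, the fact that restricting cyclic orders preserves the transversality pattern (so that the cycle pair found inside $H$ is a valid witness inside $G$), and the reinterpretation of the $K_{3,3}$ clause, without which, as noted above, the statement fails.
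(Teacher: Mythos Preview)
Your ``only if'' direction contains a genuine gap. You write: ``let $G$ be non-planar. By the Pontryagin--Kuratowski theorem the underlying graph of $G$ contains a subgraph $H$ that is a subdivision of $K_5$ or of $K_{3,3}$.'' This inference is invalid: a $\ast$-graph can be non-planar while its underlying graph is planar. Take two triangles sharing a single vertex $v$ (a ``figure eight''), and give $v$ the cyclic order that alternates the half-edges of the two triangles. The underlying graph is obviously planar, yet the $\ast$-graph is not: the two triangles are edge-disjoint cycles with exactly one transversal intersection at $v$, so your own ``if'' argument shows non-planarity. In this example Pontryagin--Kuratowski applied to the underlying graph yields nothing, and your case split never gets off the ground.

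This is exactly why the paper does not apply Pontryagin--Kuratowski to $G$ itself but to the web graph $\wG$, using Proposition~\ref{prop:planarity_equivalence_star_and web_graphs} to transfer non-planarity. The paper's proof of Theorem~\ref{thm:main_theorem2} then reuses the entire case analysis from the proof of Theorem~\ref{thm:main_theorem} --- every case except $K_{3,3}$.f already produces a Vassiliev obstruction without invoking evenness --- and replaces only case $K_{3,3}$.f (the single place evenness was used) by a separate reduction showing that the resulting immersion of $K_{3,3}$ in $G$ either contains a pair of transversal cycles or can be simplified to an embedded subdivision of $K_{3,3}$. Your idea of feeding a $K_5$-subdivision back into Theorem~\ref{thm:main_theorem} as a black box is nice and would work once you have such a subdivision, but you have not established that one exists; to salvage the argument you would still need the web-graph step (or an equivalent device) to handle $\ast$-graphs whose non-planarity comes purely from the cyclic orders. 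Your remark that ``isomorphic to $K_{3,3}$'' must be read as ``homeomorphic to $K_{3,3}$'' is correct and matches the paper's usage elsewhere.
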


\begin{remark}
Vassiliev's conjecture was originally formulated for framed 4-valent graphs~\cite{Vas}
and was proved under that restriction by V.O.~Manturov~\cite{Man}. Later T.~Friesen
generalized the result to \st-graphs with vertices of degree $4$ or $6$~\cite{Fri}. The
approach developed by Manturov was based on considering rotating Euler circuits of the
graph and allowed not only to prove the planarity criterion but also to give an
estimation of the genus of the graph (more accurately, the minimal genus of the surface
where the graph can be embedded so that its $\mathbb Z_2$-homology class is
trivial)~\cite{Man2,FriMan}.
\end{remark}

\begin{remark}
As I leaned from S.~Chmutov, an approach to Vassiliev's conjecture (for framed $4$-valent
graphs) similar to the one described here was found by A.~Kieger~\cite{Krieger} who
independently introduced web graphs and proved
Proposition~\ref{prop:planarity_equivalence_star_and web_graphs} below. However, he did
not finish consideration of all the cases needed for Theorem~\ref{thm:main_theorem}.
\end{remark}

\begin{remark}\label{rem:transersal_simple_cycles}
We can suppose that the transversal cycles in the theorem~\ref{thm:main_theorem} are simple. Indeed,let $C_1$ and $C_2$ be cycles with one transversal intersection. If $C_1$ or $C_2$ is not simple then it can be reduced to a simple one the following way. Assume that cycle $C_1$ goes several times through a vertex $v$ of the graph and consider the vertex chord diagram at $v$. Then the diagram contains chords that belong to the cycle $C_1$. There are chords $e_1e_2$ and $e_3e_4$ of $C_1$ such that the diagram circle does not contain between $e_2$ and $e_3$ any ends of chords of $C_1$. If there are no ends of chords between $e_2$ and $e_3$ (see fig.~\ref{fig:cycle_reductions} top) then we can split the cycle $C_1$ into two cycles $C'_1$ and $C''_1$ one of which has transversal intersection with $C_2$ and has less self-intersections than $C_1$. So assume that there are some chord ends of $C_2$ between $e_2$ and $e_3$. If those ends belong to different chords we can reduce the cycle $C_2$ by the reasoning above. If there is one or two end of a chord of $C_2$ between $e_2$ and $e_3$ then we can split the cycle $C_1$ and take the half with intersects transversely with $C_2$ (fig.~\ref{fig:cycle_reductions} middle and bottom). After all reductions we get two cycles which have one transversal intersection and each cycle can go through any vertex of the graph only once.

 \begin{figure}[h]\centering
 \includegraphics[width=0.5\textwidth]{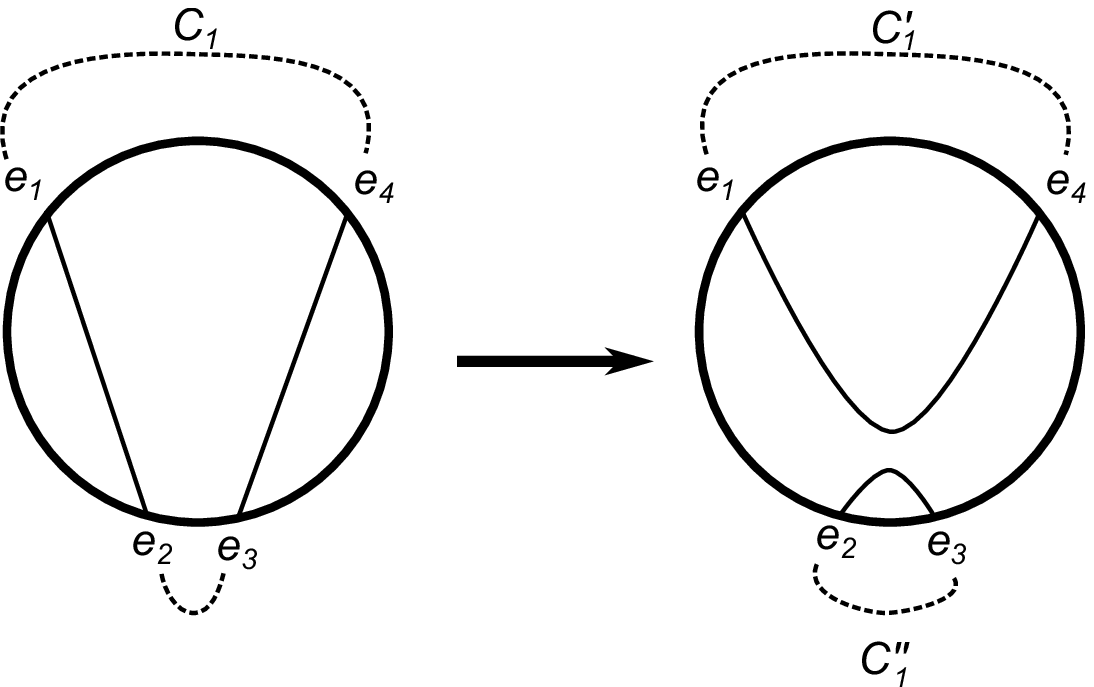}\\
 \includegraphics[width=0.5\textwidth]{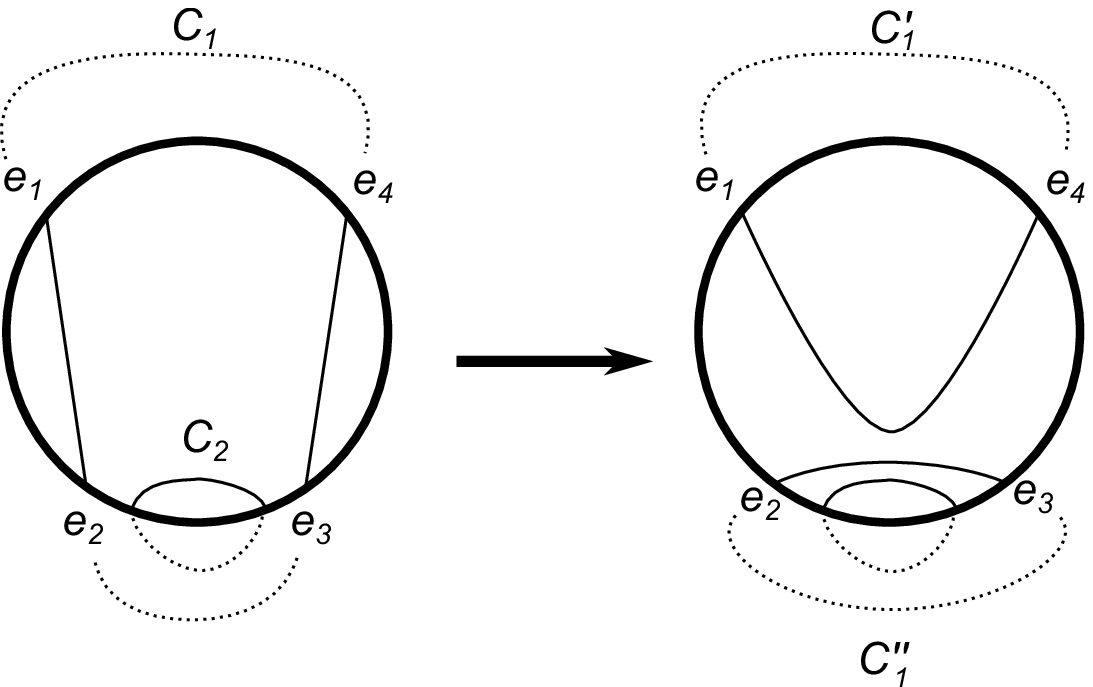}\\
 \includegraphics[width=0.5\textwidth]{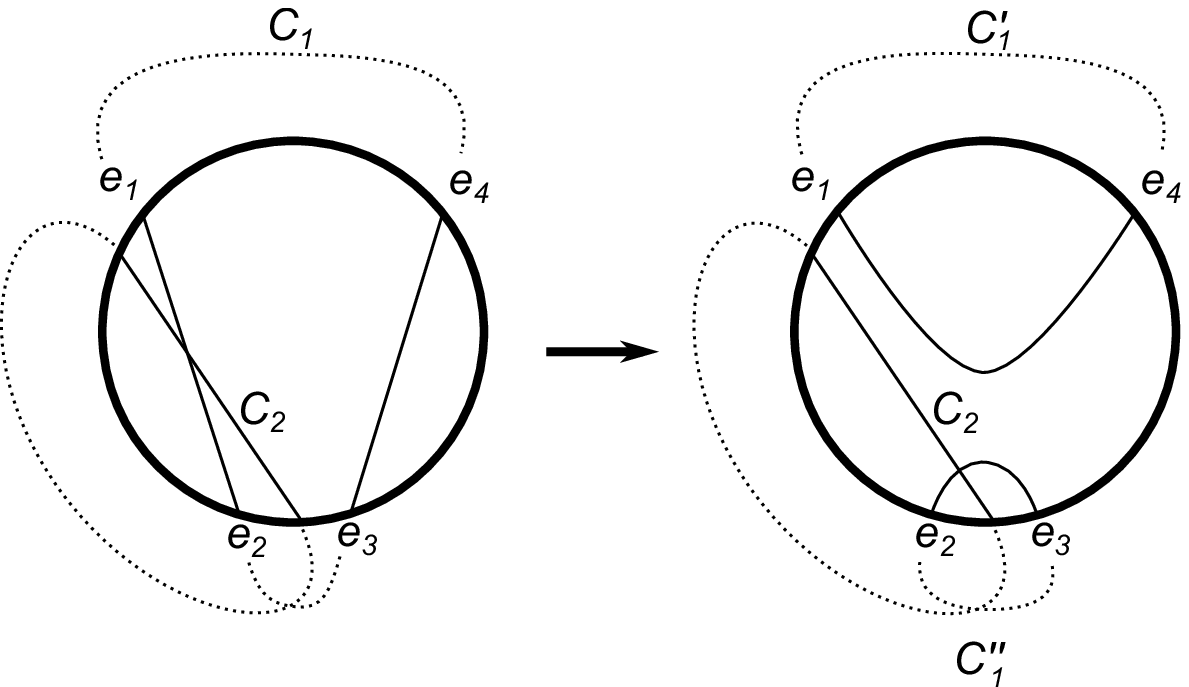}
% \matr{\includegraphics[width=0.4\textwidth]{cycle_reduction1.eps}}{a)}
% \matr{\includegraphics[width=0.4\textwidth]{cycle_reduction2.eps}}{b)}\\
% \matr{\includegraphics[width=0.4\textwidth]{cycle_reduction3.eps}}{c)}
%  \centering\includegraphics[width=0.8\textwidth]{vertex_chord_diagram.eps}
  \caption{Reduction of transversal cycles}\label{fig:cycle_reductions}
 \end{figure}
\end{remark}

For ordinary graphs, a known planarity criterion is Pontryagin--Kuratowski theorem.

\begin{nn_theorem}[Pontryagin--Kuratowski planarity criterion]\nonumber
A graph is planar if and only if it contains a subgraph homeomorphic to $K_5$ or $K_{3,3}$
(fig.~\ref{fig:graphs_K5_K33}).
\end{nn_theorem}

\begin{figure}
  \centering
  \tabcolsep=2em
  \begin{tabular}{cc}
  \includegraphics[width=0.25\textwidth]{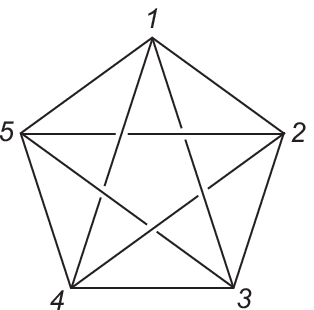}
  & \includegraphics[width=0.25\textwidth]{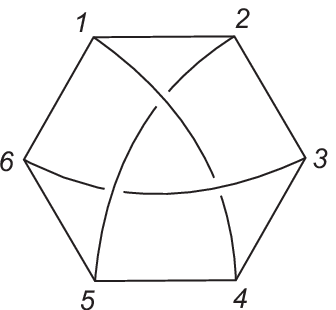} \\
  $K_5$ & $K_{3,3}$
  \end{tabular}
  \caption{Graphs $K_5$ and $K_{3,3}$}\label{fig:graphs_K5_K33}
 \end{figure}

%%%%%%%%%%%%%%%%%%%%%%%%%%%%%%%%%%%%%%%%%%%%%%%%%%%%%%%%%%%%%%%%%%%%
\section{The proof}

In order to deduce Vassiliev's criterion from Pontryagin--Kuratowski planarity criterion one need to find a way to assign to a \st-graph an ordinary graph whose planarity is closely related to the planarity of the \st-graph.

\begin{definition}
Let be $G$ be a \st-graph. We construct the {\em web graph} $\wG$ of the graph $G$ by converting each vertex of $G$ into a {\em web}: on every half-edge incident to a given vertex of $G$ we set a new vertex and connect the new vertices of adjacent half-edges with an edge (see fig.~\ref{fig:node}). Informally speaking, we draw a circle around every vertex of $G$.

 \begin{figure}[h]
  \centering\includegraphics[width=0.25\textwidth]{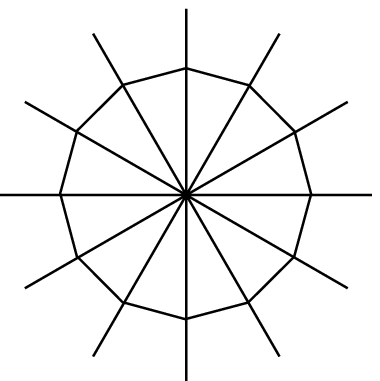}
  \caption{The web}\label{fig:node}
 \end{figure}

We call the outgoing half-edges of a web as {\em directions}. There is an unoriented order on the directions of
a web induced from the \st-structure of the graph.
\end{definition}

\begin{remark}
The web graph $\wG$ possesses a natural structure of a \st-graph but we shall consider $\wG$ only as an ordinary graph.
\end{remark}

The web graph $\wG$ contains a subgraph isomorphic to $G$ (one should remove the edges on circles of the webs
from $\wG$). On the other hand, there is a projection $\pi\colon\wG\to G$ which maps any web in $\wG$ to the
corresponding vertex of $G$.

The following technical result will be useful in constructing transversal cycles below.

\begin{lemma}\label{lem:transversal_path_projection}
Let $\gamma_1$ and $\gamma_2$ be paths in the web graph $\wG$ which don't intersect internally, i.e. $(\gamma_1\setminus\partial\gamma_1) \cap (\gamma_2\setminus\partial\gamma_2) = \emptyset$. Then the paths $\pi(\gamma_1)$ and $\pi(\gamma_2)$ in $G$ have no transversal intersections unless one of the paths becomes closed after projection and one of the following situations takes place:
\begin{itemize}
\item the ends of the path $\gamma_i$, $i=1$ or $2$, lie in one web $W$ and are separated by the path $\gamma_{3-i}$, i.e. the ends lie in different components of the graph $W\setminus\gamma_{3-i}$ (see fig.~\ref{fig:node_paths_transversal} left);
\item the ends of the paths $\gamma_1$ and $\gamma_2$ lie in one web $W$ and the corresponding directions alternate in the unoriented cyclic order of $W$ (see fig.~\ref{fig:node_paths_transversal} right).
\end{itemize}

 \begin{figure}[h]
  \centering\includegraphics[width=0.7\textwidth]{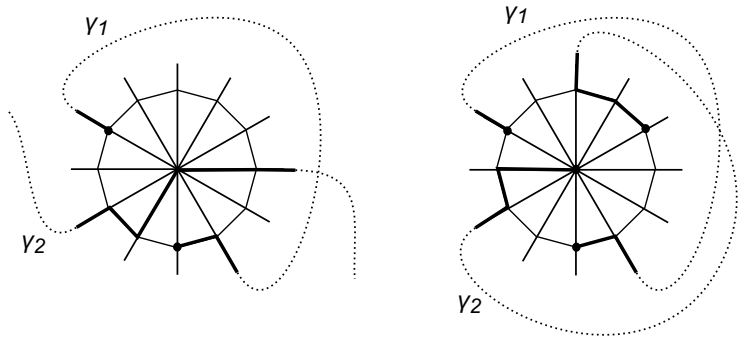}
  \caption{Configurations of paths that produce transversal intersections}\label{fig:node_paths_transversal}
 \end{figure}
\end{lemma}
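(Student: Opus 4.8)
The plan is to analyze what the projection $\pi$ does to the two paths, localizing the entire question inside a single web $W$. Since $\pi$ collapses each web to a vertex and is injective on the complement of the webs, a transversal intersection of $\pi(\gamma_1)$ and $\pi(\gamma_2)$ at a vertex $v = \pi(W)$ can only come from the portions of $\gamma_1$ and $\gamma_2$ that enter the web $W$. So the first step is to reduce to the local picture: fix a web $W$ and look at $\gamma_1 \cap W$ and $\gamma_2 \cap W$. Each of these is a disjoint union of arcs in $W$ (recall $W$ is just a cycle graph on the half-edges at $v$), together possibly with isolated endpoints of $\gamma_i$ lying in $W$.

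**Next** I would set up the correspondence between these local arcs and the chords of the vertex chord diagram at $v$. An arc of $\gamma_i$ that runs through $W$ — entering along one direction (half-edge of $G$) and leaving along another — corresponds exactly to a chord in the vertex chord diagram joining those two directions, and this is precisely the pair of consecutive edges of $\pi(\gamma_i)$ at $v$. Two such arcs (one from $\gamma_1$, one from $\gamma_2$) give a transversal intersection of $\pi(\gamma_1), \pi(\gamma_2)$ at $v$ exactly when the corresponding chords cross, i.e. when the four directions alternate in the cyclic order of $W$. Now the hypothesis that $\gamma_1$ and $\gamma_2$ do not intersect internally forbids the two arcs from actually crossing inside $W$: $W$ is a cycle, two arcs on a cycle whose four endpoints alternate around the cycle must share a point. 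Hence a genuine transversal intersection at $v$ coming from two through-arcs is impossible. The only way the four relevant directions can alternate without the arcs in $W$ crossing is if at least one of the "arcs" is not a through-arc but a stub ending inside $W$ — i.e. the corresponding $\gamma_i$ has an endpoint in $W$, so that $\pi(\gamma_i)$ becomes closed there (the two consecutive edges at $v$ are the last edge before the endpoint and the first edge after, and $\pi(\gamma_i)$ closes up precisely when an endpoint is swallowed by the web).

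**Then** I would enumerate the remaining cases according to how many of $\gamma_1, \gamma_2$ have an endpoint in $W$. If exactly one, say $\gamma_i$, has \emph{both} its endpoints in $W$ while $\gamma_{3-i}$ passes through as a through-arc (or also ends in $W$): the two endpoints of $\gamma_i$ sit on the cycle $W$, and $\gamma_{3-i}\cap W$ (an arc, or the union of its pieces) separates $W$ into components; a transversal crossing of $\pi(\gamma_i)$ with $\pi(\gamma_{3-i})$ occurs iff the two endpoints of $\gamma_i$ lie in different such components — this is the left configuration of fig.~\ref{fig:node_paths_transversal}. If instead $\gamma_1$ has one endpoint in $W$ and $\gamma_2$ has one endpoint in $W$, then both $\pi(\gamma_1)$ and $\pi(\gamma_2)$ close up at $v$, and the self-closing edges at $v$ give chords whose endpoints are the two directions of $\gamma_1$ at $v$, resp. of $\gamma_2$; these chords cross iff the directions alternate — the right configuration. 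In all other sub-cases (no endpoints in $W$, or endpoints present but arranged so the arcs in $W$ could only cross internally) the non-internal-intersection hypothesis kills the transversal crossing. Assembling these local conclusions over all webs $W$ yields the lemma.

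**The main obstacle** I anticipate is the bookkeeping of the "stub" cases: carefully distinguishing when an endpoint of $\gamma_i$ lying in $W$ does versus does not force $\pi(\gamma_i)$ to be closed, and handling a $\gamma_i$ that has one endpoint in $W$ and also passes through $W$ (so $\gamma_i \cap W$ has both an arc and a stub), making sure that the combinatorial "alternation of directions on the cycle $W$" condition is faithfully translated into "the paths have no common interior point in $W$." Pinning down exactly which endpoint-configurations are genuinely realizable without an internal intersection — and checking that the two bulleted configurations are the only survivors — is the delicate part; the rest is a routine translation between arcs on a cycle and crossing chords.
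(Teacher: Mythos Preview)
Your approach is essentially the same as the paper's: localize to a single web $W$, observe that alternating directions at $W$ force the two arcs to meet inside $W$ (contradicting the no-internal-intersection hypothesis), and then do a case split on which endpoints land in $W$ to isolate the two exceptional configurations.

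There is one slip worth flagging. You write that ``$W$ is just a cycle graph on the half-edges at $v$,'' and your key step (``two arcs on a cycle whose four endpoints alternate around the cycle must share a point'') leans on this. But in the paper's construction the web $W$ is a \emph{wheel}: the original vertex remains as a centre, joined by spokes to the new rim vertices, with the rim cycle added on top. Paths through $W$ can therefore use the centre, and the pure cycle argument does not literally apply. The paper handles this by embedding $W$ into a disk $D_W$ with the directions on the boundary and invoking planarity: two arcs in $D_W$ whose boundary endpoints alternate must cross. Your argument goes through once you replace ``cycle'' by this disk picture, and indeed the separated/non-separated dichotomy in your case analysis is exactly what the paper proves via the auxiliary arc $\delta\subset D_W\setminus(\gamma_2\cap W)$. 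So the plan is sound, but the local model of $W$ needs this correction.
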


\begin{proof}
Indeed, if $\gamma_1$ and $\gamma_2$ go through a web $W$ and their projections have a transversal intersection at the vertex $\pi(W)$ then the half-edges of $\pi(\gamma_1)$ and $\pi(\gamma_2)$ must alternate in the unoriented cyclic order at $\pi(W)$, so the directions of $\gamma_1$ and $\gamma_2$ at the web $W$ alternate too. Then the paths $\gamma_1$ and $\gamma_2$ must have an intersection point in $W$.

Thus, transversal intersection can only appear when some path becomes closed. Let the path $\gamma_1$ start and end in a web $W$ and $\gamma_2$ go through $W$. If the ends of $\gamma_1$ are separated by an arc in $\gamma_2\cap W$ then the initial and final directions of $\gamma_1$ at $W$ and the direction of the arc in $\gamma_2$ alternate in the cyclic order, so the corresponding edges of $\pi(\gamma_1)$ and $\pi(\gamma_2)$ alternate too and a transversal intersection appears. If the ends of $\gamma_1$ are not separated then we can embed the web $W$ in a disk $D_W\subset\mathbb R^2$ and connect the ends of $\gamma_1$ in $D_W$ with an ark $\delta\subset D_W\setminus(\gamma_2\cap W)$. Then $\delta$ and the initial and final part of $\gamma_1$ splits $D_W$ into two components and any arc $\gamma_2\cap W$ lies in one of these components. This ensures that the initial and final directions of $\gamma_1$ at $W$ and the direction of the arc in $\gamma_2$ don't alternate, hence the intersection $\pi(\gamma_1)$ and $\pi(\gamma_2)$ is not transversal.

The case when the ends of both the paths $\gamma_1$ and $\gamma_2$ lie in one web can be considered analogously.
\end{proof}

Consideration of the web graph is justified by the following result.

\begin{proposition}\label{prop:planarity_equivalence_star_and web_graphs}
A \st-graph $G$ is planar if and only if its web graph $\wG$ is planar (as an ordinary graph).
\end{proposition}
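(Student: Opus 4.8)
The plan is to prove the two directions of the equivalence separately, using the relationship between $G$, $\wG$, and the projection $\pi$.

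\medskip

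\noindent\textbf{The easy direction: if $\wG$ is planar then $G$ is planar.} Since $\wG$ contains a subgraph isomorphic to $G$ (obtained by deleting the circle edges of the webs), and every subgraph of a planar graph is planar, $G$ is planar as an ordinary graph. But I need more: I need an embedding \emph{compatible with the \st-structure}. For this I would start instead from a planar embedding of the whole web graph $\wG$. In such an embedding each web $W$ is a cycle bounding a disk $D_W$ (after possibly choosing the embedding so that the disk is innermost, i.e. contains no other part of $\wG$ --- one can always redraw to achieve this since the web cycle separates the plane and nothing essential need lie inside). Contracting each disk $D_W$ to a point gives an embedding of $G$ in the plane, and the cyclic order of the half-edges of $G$ at the contracted vertex is exactly the cyclic order in which the directions leave the web $W$ around $\partial D_W$, which by construction of $\wG$ is the \st-cyclic order at the corresponding vertex of $G$. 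Hence the contracted embedding is \st-compatible and $G$ is planar.

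\medskip

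\noindent\textbf{The hard direction: if $G$ is planar then $\wG$ is planar.} Given an \st-compatible embedding of $G$ in $\mathbb R^2$, I would produce an embedding of $\wG$ by a local replacement: around each vertex $v$ of $G$ choose a small disk neighborhood $D_v$ meeting $G$ in $v$ together with short initial segments of the incident half-edges. Because the embedding is \st-compatible, these segments leave $v$ in the cyclic order prescribed by the \st-structure at $v$. Now delete $v$ from $D_v$ and insert the web $W_v$: place its new vertices on the corresponding half-edge segments (one per half-edge) and draw the web cycle edges as arcs inside $D_v$ connecting \st-adjacent new vertices. Since consecutive new vertices in the \st-order are also consecutive around $\partial D_v$, these arcs can be drawn disjointly inside $D_v$ (they form a convex polygon inscribed in the disk, say). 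Doing this simultaneously at all vertices, and leaving the rest of $G$ untouched, yields an embedding of $\wG$ in the plane.

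\medskip

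\noindent\textbf{Main obstacle.} The real content is making the hard direction airtight: one must check that the arcs inserted inside $D_v$ together with the truncated edges of $G$ genuinely do not cross, and that the global picture is still an embedding (the modifications in different disks are disjoint, and outside the disks nothing changed). This is essentially a bookkeeping argument once the \st-compatibility is invoked to guarantee that ``\st-adjacent'' equals ``consecutive in the planar rotation around $v$.'' I expect the authors' proof to proceed by exactly this local surgery, perhaps phrased via the rotation system / ribbon-graph viewpoint: an \st-graph is planar iff the associated rotation system (with all orientations suitably chosen) has Euler characteristic $2$, and passing to $\wG$ does not change the surface since each web bounds a disk. Either formulation reduces the proposition to the observation that inserting a disk with a boundary polygon inside a face does not affect planarity.
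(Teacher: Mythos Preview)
Your argument is correct, and for the direction ``$G$ planar $\Rightarrow$ $\wG$ planar'' it coincides with the paper's one-line proof (draw small circles around the embedded vertices). But you have the difficulty backwards: the paper treats that as the trivial direction and puts the work into ``$\wG$ planar $\Rightarrow$ $G$ planar'', where your approach and the paper's genuinely diverge.

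You contract each web's boundary cycle to a point and read off the $\st$-order from the rim. The paper instead restricts the planar embedding of $\wG$ to the subdivided copy of $G$ sitting inside it (centers plus spokes, with the circle edges deleted) and then argues by contradiction that the rotation at each center $O$ must already agree with the $\st$-order: if four spokes $OA_1,\dots,OA_4$ appeared around $O$ in a permuted cyclic order, one assembles a planar $K_5$ on $\{O,A'_1,A'_2,A'_3,A'_4\}$ using short new arcs between cyclically consecutive $A'_i$ near $O$ together with the circle arcs $A_1A_2$ and $A_3A_4$, contradicting Pontryagin--Kuratowski. Your contraction route is arguably more direct (the rim cycle enforces the order automatically, without a second appeal to Kuratowski), but two points deserve care. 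First, a web is a wheel, not just a cycle: the original vertex of $G$ and its spokes sit inside $D_W$, so your phrase ``contains no other part of $\wG$'' and the claim ``each web $W$ is a cycle'' are not literally correct and should be adjusted. Second, the redrawing you hand-wave (pushing stray pieces of $\wG$ out of the triangular faces of the wheel so that all directions exit $D_W$) is where the residual work hides in your version, whereas the paper's $K_5$ argument applies to an arbitrary planar embedding of $\wG$ with no redrawing needed.
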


\begin{proof}
Indeed, if there is an embedding of $G$ into plane that conserves the unoriented cyclic order at each vertex then one can extend the embedding to an embedding of $\wG$ by drawing small circles around the images of the vertices of $G$.

On the other hand, if there is an embedding of $\wG$ we can take the restriction of the embedding to the subgraph of $\wG$ isomorphic to $G$. It will be an embedding of $G$ provided it preserves the cyclic order at vertices. It is suffice to show that the unoriented cyclic order does not change for any four directions of any web in $\wG$. So, assume that for some web with the central vertex $O$ and non-central vertices $A_1, A_2, A_3, A_4$ (arranged according the unoriented cyclic order) the order of the directions $OA_1$, $OA_2$, $OA_3$, $OA_4$ is changed by the embedding to $OA_1$, $OA_3$, $OA_2$, $OA_4$. Then we can construct an embedding of $K_5$ into plane as follows (see fig.~\ref{fig:cyclic_order_keeps}). Consider the subgraph of the embedded graph, which contains the edges $OA_i$, $i=1,2,3,4$ and the circle of the web. We take points $A'_i$, $i=1,2,3,4$, on the edges $OA_i$ near the point $O$. Since the cyclic order was changes we can add edges $A'_1A'_3$, $A'_3A'_2$, $A'_2A'_4$ and $A'_4A'_3$ to the subgraph. Then the new edges, edges $OA'_i$, $A'_iA_i$, $i=1,2,3,4$, and paths $A_1A_2$ and $A_3A_4$, which lies on the web circle, form an embedded graph isomorphic to $K_5$. The contradiction with Pontryagin--Kuratowski theorem implies the embedding must be compatible with the \st-structure.
\end{proof}

 \begin{figure}[h]
  \centering\includegraphics[width=0.3\textwidth]{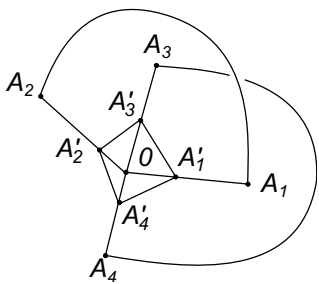}
  \caption{Embedding of $K_5$ induced by wrong cyclic order}\label{fig:cyclic_order_keeps}
 \end{figure}

%\begin{proofof}{Theorem~\ref{thm:main_theorem}}
\noindent {\bf Proof of Theorem~\ref{thm:main_theorem}.}
We start the proof of theorem~\ref{thm:main_theorem} with the simple part. Let $G$ be a \st-graph that contains a pair of cycles $C_1$ and $C_2$ which has exactly one transversal intersection. According remark~\ref{rem:transersal_simple_cycles}, we can suppose $C_1$ and $C_2$ to be simple cycles. Since $G$ embeds in its web graph $\wG$, the cycles $C_1$ and $C_2$ can be considered as cycles in $\wG$. We modify the cycles as following: we separate the cycles in the webs where they have nontransversal intersections and add the circle of the web where the cycles have transversal intersection (see fig.~\ref{fig:necessity}). The resulting subgraph will be isomorphic to $K_5$ so the graph $\wG$ as well as the \st-graph $G$ will be nonplanar.

 \begin{figure}[h]
  \centering\includegraphics[width=0.6\textwidth]{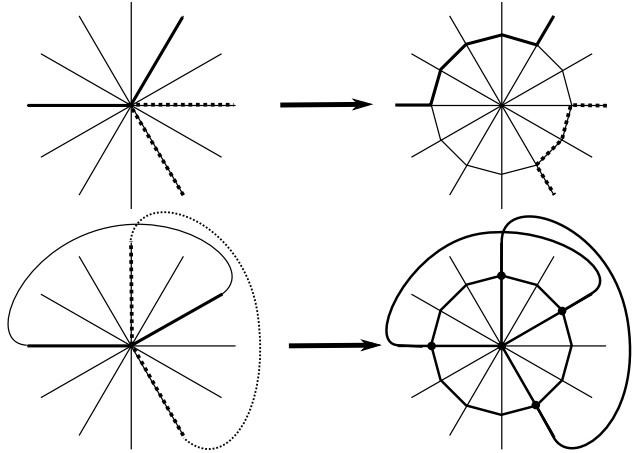}
  \caption{Construction of $K_5$ from a pair of transversal cycles}\label{fig:necessity}
 \end{figure}

Now let $G$ be a nonplanar even \st-graph. Then its web graph $\wG$ is nonplanar too. By Pontryagin--Kuratowski
theorem $\wG$ contains a subgraph $\Gamma$ isomorphic to $K_5$ and $K_{3,3}$. Later we shall ignore the vertices
of $\Gamma$ of degree $2$ and assume that a {\em vertex} of $\Gamma$ has always degree $3$ (when $\Gamma\simeq
K_{3,3}$) or $4$ (when $\Gamma\simeq K_5$) and an {\em edge} of $\Gamma$ is in fact a path in $\Gamma$ whose
inner vertices are of degree two.

\begin{remark}\label{rem:node_disk}
 Let $W$ be a web in $\wG$. Since the graph $W$ is planar we can embed it and its outgoing half-edges into
a disk $D_W\subset\mathbb R^2$. Denote the part of the part of $\Gamma$ which lie inside $D_W$ as $\Gamma_W$.
This part can be considered as a plane graph with internal vertices of degree $3$ or $4$ and boundary vertices
on degree $1$ which lie on the boundary of the disk $D_W$ (see fig.~\ref{fig:node_disk}). Let $\bar\Gamma_W$ be
the union of components of $\Gamma_W$ which contain internal vertices and $\bar\Gamma^{int}_W$ be the full
subgraph of $\bar\Gamma_W$ spanned on its internal vertices. Remark that the graph $\bar\Gamma_W$ is determined
up to isomorphism by $\bar\Gamma^{int}_W$.

Let $v_1, v_2$ be adjacent vertices of $\Gamma$ such that $v_1\in W$ and the edge $v_1v_2\subset\Gamma$ does not
lie in $W$. We define the {\em direction $v_1\to v_2$ of the edge $v_1v_2$} as the outgoing half-edge of $W$
which appears first in the edge $v_1v_2$ (i.e. the path in $\Gamma$ that connects $v_1$ and $v_2$).

 \begin{figure}
  \centering\includegraphics[width=0.4\textwidth]{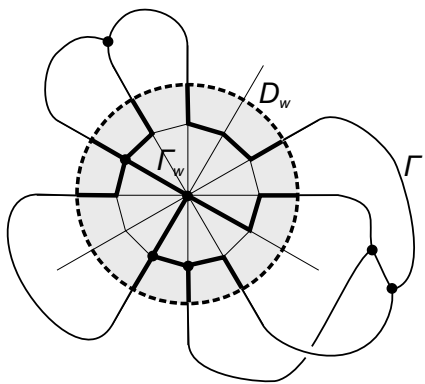}
  \caption{The web disk}\label{fig:node_disk}
 \end{figure}
\end{remark}

The rest of the proof of the main theorem is essentially a search through possible configurations for plane
graphs $\Gamma_W$.

Case $K_5$. The subgraph $\Gamma$ is isomorphic to $K_5$.\\
Denote the vertices of $\Gamma$ as $1,2,3,4,5$.

Case  $K_5$.a. There is a web $W$ that contains only one vertex of $\Gamma$.\\
Without loss of generality, we can assume that $1\in W$ and that directions of the edges incident to $1$ go in
the following cyclic order at $W$: $1\to 2, 1\to 3, 1\to 4, 1\to 5$. Consider the paths $\gamma_1=1241$ and
$\gamma_2=1351$ in $\Gamma$ (see. fig.~\ref{fig:case_K5_v1}). By lemma~\ref{lem:transversal_path_projection} the
projections $\pi(\gamma_1)$ and $\pi(\gamma_2)$ form a pair of cycles with one transversal intersection.

 \begin{figure}[h]
  \centering\includegraphics[width=0.3\textwidth]{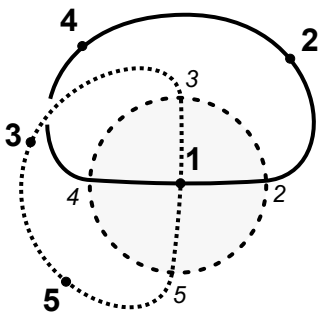}
  \caption{Case $K_5$.a : one vertex in the web}\label{fig:case_K5_v1}
 \end{figure}

Case  $K_5$.b. There is a web $W$ that contains exactly two vertices of $\Gamma$. \\
We can assume that $W$ contains the vertices $1$ and $2$. Then vertices $1$ and $2$ are connected inside $W$
(i.e. $12\subset W$) through the central vertex of $W$. Let $i,j,k$ (corr., $k',j',i'$) be the numbers of
vertices given in the cyclic order of the directions of edges connecting them to the vertex $1$ (corr., $2$),
see fig.~\ref{fig:case_K5_v2}. Then $\{i,j,k\}=\{i',j',k'\}=\{3,4,5\}$.

 \begin{figure}[h]
  \centering\includegraphics[width=0.25\textwidth]{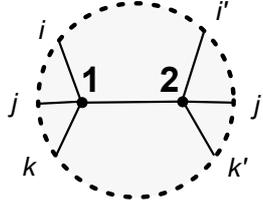}
  \caption{Case $K_5$.b : two vertices in the web}\label{fig:case_K5_v2}
 \end{figure}

Case $K_5$.b.1. The orders of edges incident to the vertices $1$ and $2$ are not compatible
($(i,j,k)\ne(i',j',k')$).\\
Let $i\ne i'$. We can suppose that $i=3,i'=5$. Then by lemma~\ref{lem:transversal_path_projection} the
projections of paths $\gamma_1=132$ and $\gamma_2=152$ give a Vassiliev's obstruction in $G$ (see
fig.~\ref{fig:case_K5_v2_subcases}). The case $k\ne k'$ is considered analogously.

  \begin{figure}[!ht]
  \centering\includegraphics[width=0.3\textwidth]{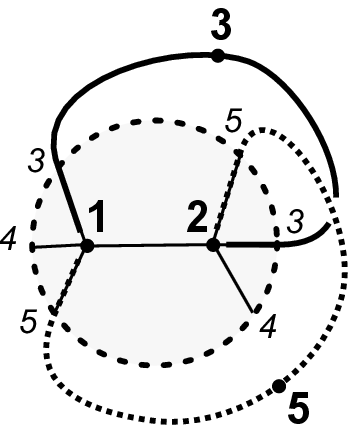}
  \qquad \includegraphics[width=0.3\textwidth]{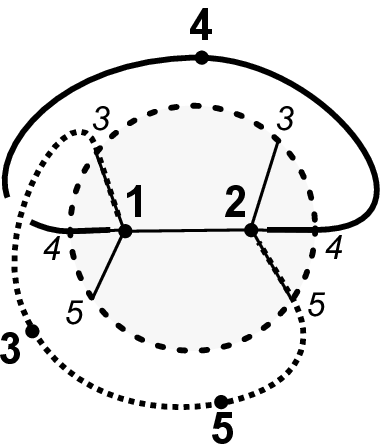}
  \caption{Cases $K_5$.b.1 (left) and $K_5$.b.2 (right)}\label{fig:case_K5_v2_subcases}
 \end{figure}

Case $K_5$.b.2. The orders of edges incident to the vertices $1$ and $2$ are compatible
($(i,j,k)=(i',j',k')$).\\
We can assume that $i=3,j=4,k=5$. Then paths $\gamma_1=142$ and $\gamma_2=1352$ give a Vassiliev's obstruction
(see fig.~\ref{fig:case_K5_v2_subcases}).

Case $K_5$.c. There is no webs which contains one or two vertices of $\Gamma$.\\
Then all the vertices lie in the same web $W$. Since for any vertex of $\Gamma$ one of edges incident to it goes
through the center of the web, the central vertex of $W$ is a vertex of $\Gamma$ and the other four vertices of
$\Gamma$ lie in the web circle. Enumerate the vertices of $\Gamma$ so that $1$ be the central and the vertices
$2,3,4,5$ follow in the given cyclic order (see fig.~\ref{fig:case_K5_v5}). Then the vertices $2$ and $4$ ($3$
and $5$) can not be connected inside the web, so the directions $2\to 4$, $3\to 5$, $4\to 2$ and $5\to 3$ are
defined. Moreover, the directions are cyclically arranged as written above. Hence, paths $\gamma_1=1241$ and
$\gamma_2=1351$ give a Vassiliev's obstruction.

 \begin{figure}[!ht]
  \centering\includegraphics[width=0.3\textwidth]{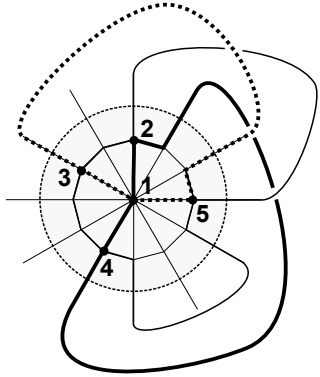}
  \caption{Case $K_5$.c : five vertices in the web}\label{fig:case_K5_v5}
 \end{figure}

Case $K_{3,3}$. The subgraph $\Gamma$ is isomorphic to $K_{3,3}$. \\
 We denote the vertices of $\Gamma$ as $1,2,3,4,5,6$ so that the vertices of different parity are adjacent.

Case $K_{3,3}$.a. There is a web $W$ that contains exactly two vertices of $\Gamma$.

Case $K_{3,3}$.a.1. The two vertices in $W$ are connected in $W$.\\
We can assume that $1,2\in W$. Enumerate the other vertices so that the directions of edges incident to the
vertices  $1$ and $2$ follow in the cyclic order $1\to 4, 1\to 6, 2\to 5, 2\to 3$ (see
fig.~\ref{fig:case_K33_v2_connected}). Then paths $\gamma_1=1452$ and $\gamma_2=1632$ give a Vassiliev's
obstruction.

 \begin{figure}[!ht]
  \centering\includegraphics[width=0.3\textwidth]{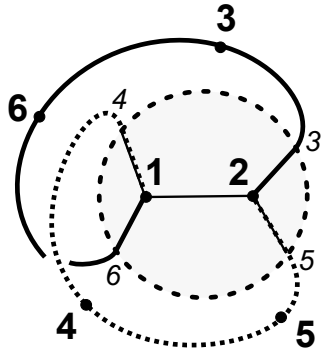}
  \caption{Case $K_{3,3}$.a.1 : two connected vertices in the web}\label{fig:case_K33_v2_connected}
 \end{figure}

Case $K_{3,3}$.a.2. The two vertices in $W$ are not separated, i.e. they can be connected in $D_W$
with an arc which does not intersect $\Gamma$.\\
There are two possibilities: the vertices in $W$ are either adjacent in $\Gamma$ or not (see
fig.~\ref{fig:case_K33_v2_nonseparated}). In the first case, we can use the reasonings of the case
$K_{3,3}$.a.1; in the last case, we can follow the reasonings of the case $K_5$.b.

  \begin{figure}[!ht]
  \centering\includegraphics[width=0.25\textwidth]{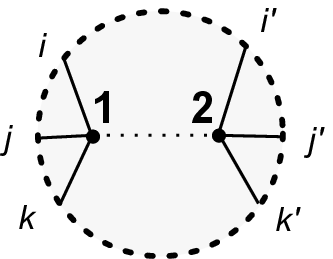}
  \qquad \includegraphics[width=0.25\textwidth]{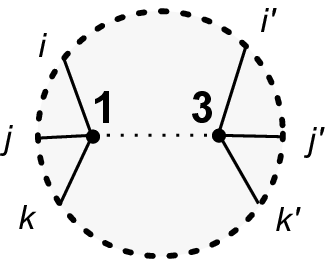}
  \caption{Case $K_{3,3}$.a.2 : two nonconnected nonseparated vertices in the web }\label{fig:case_K33_v2_nonseparated}
 \end{figure}

Case $K_{3,3}$.a.3. The two vertices in $W$ are separated by an arc
of $\Gamma\cap W$.

\begin{nn_remark}
There can be only one separating arc in a web because the arc must
contain the central vertex of the web.
\end{nn_remark}

 We can suppose that $W$ contains the vertices
$1$ and $2$ (if the vertices are adjacent in $\Gamma$) or $1$ and
$3$ (if they are not adjacent). Denote by $i,j,k,k',j',i'$ the
number of vertices arranged according the cyclic order of directions
of edges incident to $1$ or $2$ (or $1$ or $3$), and let the
separating arc belong to the edge $pq$ (see
fig.~\ref{fig:case_K33_v2_separated}).
  \begin{figure}[!ht]
  \centering\includegraphics[width=0.25\textwidth]{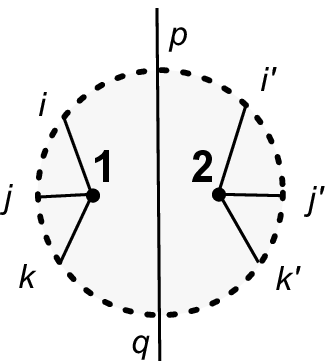}
  \qquad \includegraphics[width=0.25\textwidth]{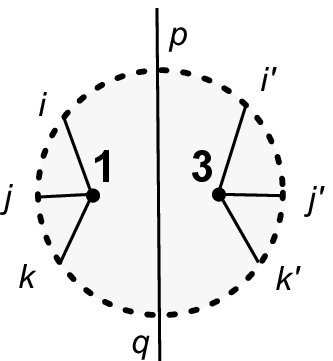}
  \caption{Cases $K_{3,3}$.a.3.1 (left) and $K_{3,3}$.a.3.2 (right): two separated vertices in the web }\label{fig:case_K33_v2_separated}
 \end{figure}

Case $K_{3,3}$.a.3.1. The two separated vertices in $W$ are adjacent.\\
If $\{p,q\}=\{1,2\}$ then we can substitute the edge $12$ with an
arc, which connects $1$ and $2$ inside $D_W$, and apply the
reasonings of the case $K_{3,3}$.a.1. If $\{p,q\}\cap
\{1,2\}=\emptyset$, for instance, $p=3$, $q=4$ then the paths
$\gamma_1=12$ and $\gamma_2=34563$ yield a Vassiliev's obstruction
(see fig.~\ref{fig:case_K33_v2_(12)_separated} left). Finally,
$\left|\{p,q\}\cap \{1,2\}\right|=1$, for instance, $p=1$, $q=4$
then the paths we should choose depend on the position of the
direction $1\to 2$. If the direction $1\to 2$ is not separated in
the cyclic order from directions $2\to i$ by the directions $1\to 4$
and $1\to 6$ (see fig.~\ref{fig:case_K33_v2_(12)_separated} middle)
then we take $\gamma_1=12$ and $\gamma_2=14361$. If the direction
$1\to 2$ lies in the cyclic order between $1\to 4$ and $1\to 6$ (see
fig.~\ref{fig:case_K33_v2_(12)_separated} right) then we can assume
that the directions $2\to 1$ and $2\to 3$ are adjacent in the cyclic
order (otherwise we interchange the numbers of the vertices $3$ and
$5$) and take $\gamma_1=12341$ and $\gamma_2=1652$. In either case
the projections of the paths $\gamma_1$ and $\gamma_2$ form a pair
of transversal cycles.

  \begin{figure}[!ht]
  \centering\includegraphics[width=0.25\textwidth]{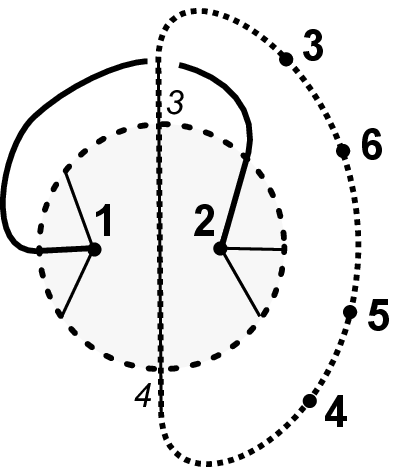}
  \quad \includegraphics[width=0.2\textwidth]{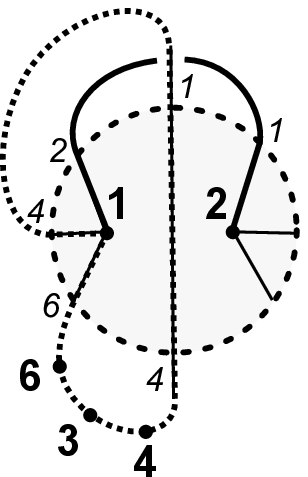}
  \quad \includegraphics[width=0.25\textwidth]{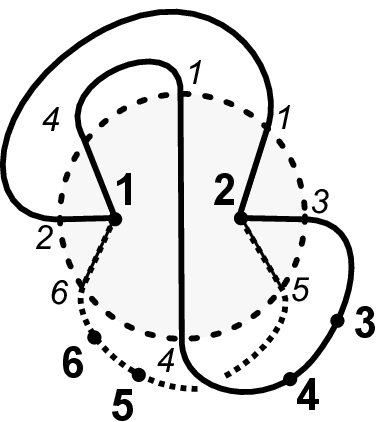}
  \caption{Case $K_{3,3}$.a.3.1: two separated adjacent vertices in the web }\label{fig:case_K33_v2_(12)_separated}
 \end{figure}

Case $K_{3,3}$.a.3.1. The two separated vertices in $W$ are not
adjacent.\\
Let $\{p,q\}\cap\{1,3\}=\emptyset$, for instance, $p=2$, $q=5$. We
can suppose that the directions $1\to 2$ and $1\to 4$ are adjacent
in the cyclic order (otherwise we interchange the numbers of the
vertices $4$ and $6$), see fig.~\ref{fig:case_K33_v2_(13)_separated}
left. Then the paths $\gamma_1=12541$ and $\gamma_2=163$ gives a
Vassiliev's obstruction after projection to $G$. Another possibility
is that $p$ or $q$ is equal $1$ or $3$, for example, $p=1$, $q=2$.
We can assume again that the directions $1\to 2$ and $1\to 6$ are
adjacent (see fig.~\ref{fig:case_K33_v2_(13)_separated} right). Then
we should take again the paths $\gamma_1=12541$ and $\gamma_2=163$
and get a Vassiliev's obstruction from them.

   \begin{figure}[!ht]
  \centering\includegraphics[width=0.25\textwidth]{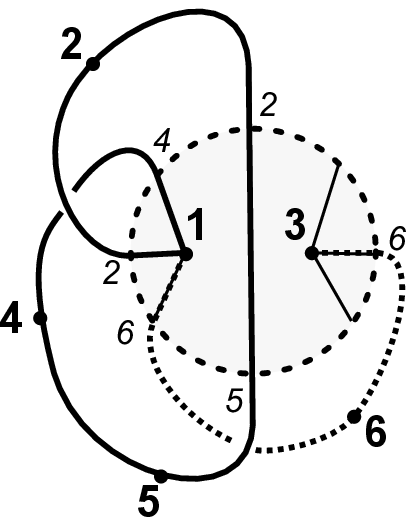}
  \qquad \includegraphics[width=0.25\textwidth]{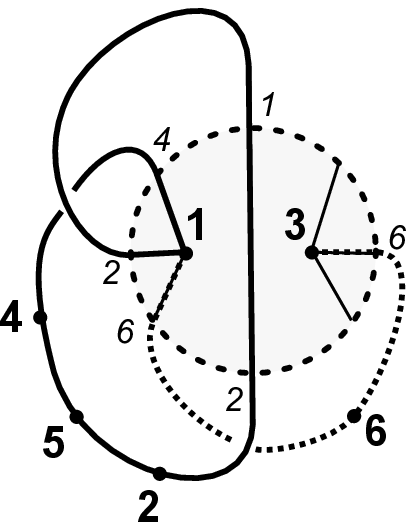}
  \caption{Case $K_{3,3}$.a.3.2 : two separated nonadjacent vertices in the web }\label{fig:case_K33_v2_(13)_separated}
 \end{figure}

Thus, the case $K_{3,3}$.a is completely investigated.

\begin{nn_remark}
If the number of vertices of $\Gamma$ in a web $W$ is grater than
$2$ we can use the following reductions.

If there is a separating arc then it splits the disc $D_W$ into two halves. Denote $D'$ to be the half which contains not less than two vertices of $\Gamma$ (see fig.~\ref{fig:node_reduction} left). Then $D'$ contains less or equal number of vertices of $\Gamma$ than the disc $D_W$. Moreover, the cyclic order of directions of edges outgoing from $D'$ coincides with the cyclic order in $D_W$. Hence if we construct a pair of paths $\gamma_1$ and $\gamma_2$ whose directions alternate in the cyclic order of $D'$ (as we did above) then the same will be true in the cyclic order $D_W$. Hence, the projections of the paths $\gamma_1$ and $\gamma_2$ in $G$ will be a pair of transversal cycles.

If the graph $\bar\Gamma_W$ (or $\bar\Gamma^{int}_W$) is not connected then the disc $D_W$ can be separated with an arc so that one of the halves $D'$ of the disc contains less vertices of $\Gamma$ than $D_W$ but contains at least two vertices. The reasonings of the previous paragraph show that we can construct the paths $\gamma_1$ and $\gamma_2$ considering the part of $\Gamma$ which lies in $D'$.

 \begin{figure}[!ht]
  \centering\includegraphics[width=0.20\textwidth]{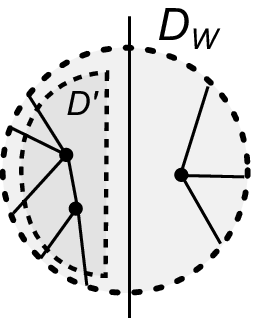}
  \qquad \includegraphics[width=0.2\textwidth]{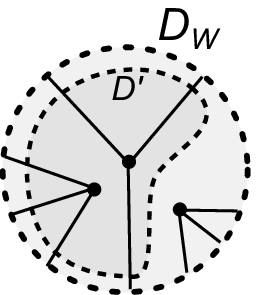}  \caption{Reductions for a separated (left) and nonconnected (right) cases}\label{fig:node_reduction}
 \end{figure}

Thus, later on we need to consider only nonseparated cases with
connected graphs $\bar\Gamma^{int}_W$. The graph
$\bar\Gamma^{int}_W$ must be a planar subgraph of $K_{3,3}$.
\end{nn_remark}

Case $K_{3,3}$.b. There is a web $W$ which contains three vertices of $\Gamma$.\\
Up to isomorphism there is only one connected subgraph in $K_{3,3}$ with three vertices. So let the web contain the vertices $1$, $2$ and $3$ and the vertices be connected inside $W$. Without loss of generality we can suppose that the directions of edges incident to the vertices $1$, $2$ and $3$ follow in the cyclic order $1\to 4$, $1\to 6$, $2\to 5$ then directions of the vertex $3$ (see fig.~\ref{fig:case_K33_v3}). Then the paths $\gamma_1=1452$ and $\gamma_2=163$ give a Vassiliev's obstruction.

 \begin{figure}[!ht]
  \centering\includegraphics[width=0.3\textwidth]{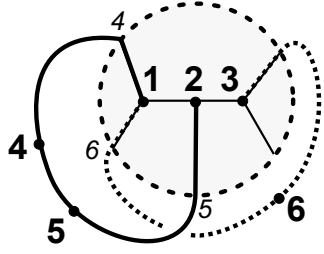}
  \caption{Case $K_{3,3}$.b : three vertices in the web}\label{fig:case_K33_v3}
 \end{figure}

Case $K_{3,3}$.c. There is a web $W$ which contains four vertices of $\Gamma$.\\
Up to isomorphism there are three connected subgraphs in $K_{3,3}$ with four vertices (see fig.~\ref{fig:node_graph_K33_v4}).

  \begin{figure}[!ht]
  \centering
  \begin{tabular}{ccc}
  \includegraphics[width=0.17\textwidth]{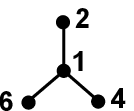} &
  \includegraphics[width=0.25\textwidth]{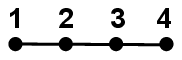} &
  \includegraphics[width=0.165\textwidth]{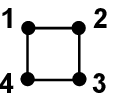}\\
  1) & 2) & 3)
  \end{tabular}
  \caption{Connected subgraphs of $K_{3,3}$ with $4$ vertices}\label{fig:node_graph_K33_v4}
 \end{figure}

Case $K_{3,3}$.c.1. The graph $\bar\Gamma^{int}_W$ is a tree with a vertex of degree $3$. \\
Then there are two vertices among $2,4,6$, for instance $2$ and $4$, such that the directions of the edges incident to these vertices are arranged in the unoriented cyclic order as follows: $2\to 3$, $2\to 5$, $4\to 3$, $4\to 5$ (see fig.~\ref{fig:case_K33_v4_1}). Then the projections of the paths $\gamma_1=234$ and $\gamma_2=254$ form a Vassiliev's obstruction.

Case $K_{3,3}$.c.2. The graph $\bar\Gamma^{int}_W$ is a tree without vertices of degree $3$.\\
We suppose that the vertices are enumerated as shown in fig.~\ref{fig:node_graph_K33_v4}.2). There are two possibilities: the directions of the edges $25$ and $36$ can be either separated by the directions of edges incident to the vertices $1$ and $4$ or not. If the directions $2\to 5$ and $3\to 6$ are not separated (see fig.~\ref{fig:case_K33_v4_2} left) then we choose the paths $\gamma_1=163$ and $\gamma_2=254$. If they are separated (see fig.~\ref{fig:case_K33_v4_2} right) we take the paths $\gamma_1=14$ and $\gamma_2=2563$. In both cases the paths $\gamma_1$ and $\gamma_2$ yield a pair of transversal cycles in $G$.

 \begin{figure}[!ht]
  \centering\includegraphics[width=0.25\textwidth]{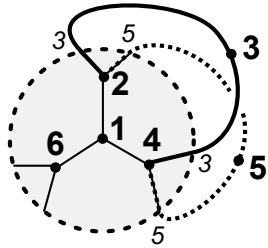}
  \caption{Case $K_{3,3}$.c.1}\label{fig:case_K33_v4_1}
 \end{figure}

   \begin{figure}[!ht]
  \centering\includegraphics[width=0.23\textwidth]{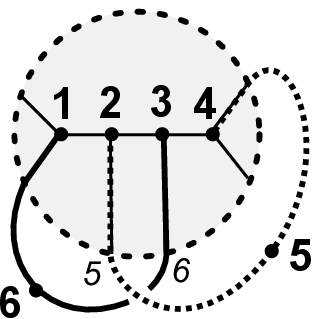}
  \qquad \includegraphics[width=0.28\textwidth]{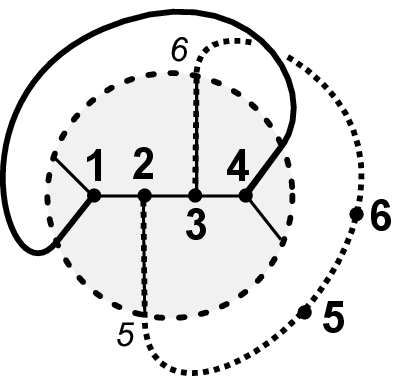}
  \caption{Case $K_{3,3}$.c.2}\label{fig:case_K33_v4_2}
 \end{figure}

Case $K_{3,3}$.c.3. The graph $\bar\Gamma^{int}_W$ is a cycle.\\
A Vassiliev's obstruction is obtained if we take the paths $\gamma_1=163$ and $\gamma_2=254$ (see fig.~\ref{fig:case_K33_v4_3}).

 \begin{figure}[!ht]
  \centering\includegraphics[width=0.25\textwidth]{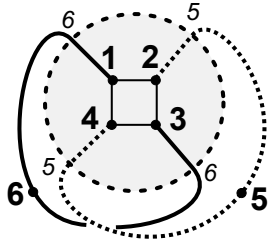}
  \caption{Case $K_{3,3}$.c.3}\label{fig:case_K33_v4_3}
 \end{figure}

There is a web $W$ which contains five vertices of $\Gamma$.\\

\begin{nn_remark}
The graph  $\bar\Gamma^{int}_W$ can not contain the graph $K_{2,3}$ (see fig.~\ref{fig:node_graph_K33_forbidden}). Indeed, if the graph $K_{2,3}\subset \bar\Gamma^{int}_W\subset\Gamma$ is embedded into the web $W$ then one of the vertices of $K_{2,3}$ of degree $2$ lies in the center of the web, whereas the edges between the other vertices $K_{2,3}$ cover the circle of the web. Hence the vertex of $\Gamma\setminus K_{2,3}$ lies outside the web $W$ and the central vertex can not be connected with it by a path which does not intersect the other edges of the graph $\Gamma$.

 \begin{figure}[!ht]
  \centering\includegraphics[width=0.13\textwidth]{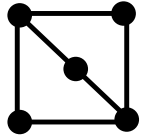}
  \caption{Forbidden subgraph of $K_{3,3}$}\label{fig:node_graph_K33_forbidden}
 \end{figure}
\end{nn_remark}

Excluding the forbidden graph, there are three connected subgraphs in $K_{3,3}$ with five vertices (see fig.~\ref{fig:node_graph_K33_v5}) up to isomorphism.

  \begin{figure}
  \centering
  \begin{tabular}{m{0.25\textwidth}m{0.21\textwidth}m{0.15\textwidth}}
  \includegraphics[width=0.25\textwidth]{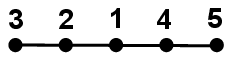} &
  \includegraphics[width=0.21\textwidth]{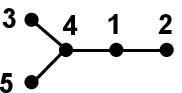} &
  \includegraphics[width=0.15\textwidth]{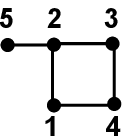}\\
  \multicolumn{1}{c}{1)} & \multicolumn{1}{c}{2)} & \multicolumn{1}{c}{3)}
  \end{tabular}
  \caption{Admissible connected subgraphs of $K_{3,3}$ with $5$ vertices}\label{fig:node_graph_K33_v5}
 \end{figure}

Case $K_{3,3}$.d.1. The graph $\bar\Gamma^{int}_W$ is a tree without vertices of degree $3$.\\
Enumerate the vertices of $\bar\Gamma^{int}_W$ as shown in fig.~\ref{fig:node_graph_K33_v5}.1).

Assume first, that the directions of the edges incident to the vertices $3$ and $5$ go in the cyclic  order  as follows: $3\to 4$, $5\to 2$, $5\to 6$, $3\to 6$. If the direction $2\to 5$ lies in the cyclic order between $3\to 6$ and $5\to 6$ then the paths $\gamma_1=3655$ and $\gamma_2=25$ give a Vassiliev's obstruction (see fig.~\ref{fig:case_K33_v5_1} left). The same reasonings work if the direction $4\to 3$ lies between $3\to 6$ and $5\to 6$. Hence, we can suppose that the directions $2\to 5$ and $4\to 3$ lies between $3\to 4$ and $5\to 2$ (see fig.~\ref{fig:case_K33_v5_1} middle). Then the projections of the paths $\gamma_1=25$ and $\gamma_2=34$ will have one transversal intersection.

Let the cyclic order of the directions of the edges incident to the vertices $3$ and $5$ be $3\to 4$, $3\to 6$, $5\to 2$, $5\to 6$. Repeating the reasonings of the previous paragraph we can construct a Vassiliev's obstruction if the direction $2\to5$ lies between $3\to 4$ and $5\to 6$ or the the direction $4\to3$ lies between $3\to 6$ and $5\to 2$. So we assume that $2\to5$ lies between $3\to 6$ and $5\to 2$ and $4\to3$ lies between $3\to 4$ and $5\to 6$. Without loss of generality we can suppose that the direction $1\to 6$ lies between $3\to 6$ and $5\to 2$ (see fig.~\ref{fig:case_K33_v5_1} right). Then a Vassiliev's obstruction is given by the paths $165$ and $25$.

 \begin{figure}[!ht]
  \centering\includegraphics[width=0.26\textwidth]{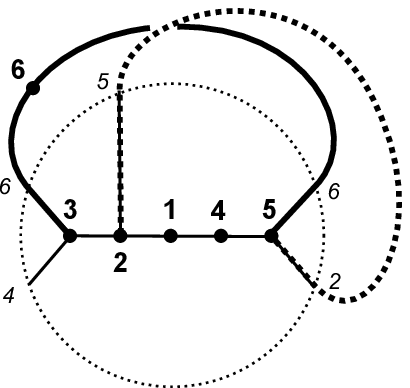}
  \qquad \includegraphics[width=0.22\textwidth]{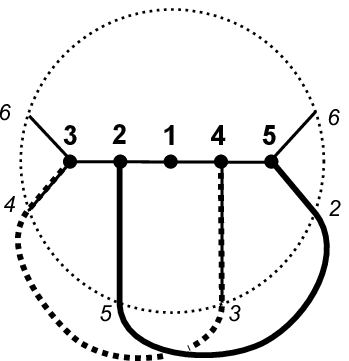}
  \qquad \includegraphics[width=0.25\textwidth]{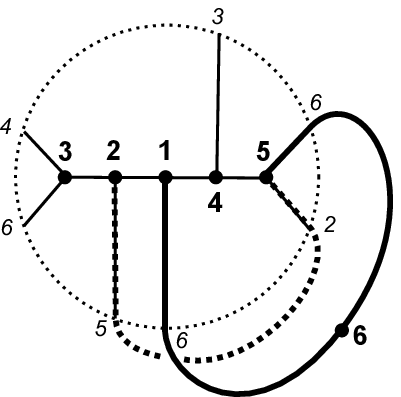}
  \caption{Case $K_{3,3}$.d.1}\label{fig:case_K33_v5_1}
 \end{figure}

Case $K_{3,3}$.d.2. The graph $\bar\Gamma^{int}_W$ is a tree with a vertex of degree $3$.\\
Enumerate the vertices of $\bar\Gamma^{int}_W$ as shown in fig.~\ref{fig:node_graph_K33_v5}.2). Without loss of generality we can suppose that the direction $1\to 6$ lies between the directions of the vertex $2$ and the directions of the vertex $3$.

Let the direction $3\to 6$ (and directions of the vertex $2$) separate  the direction $1\to 6$ from the direction $3\to 2$ (see fig.~\ref{fig:case_K33_v5_2} left). Then a Vassiliev's obstruction can be obtained from the paths $365$ and $25$.

Thus, we can assume that the direction $3\to 2$ lies between $3\to 6$ and $1\to6$. Then the paths $163$ and $23$ give a Vassiliev's obstruction.

 \begin{figure}[!ht]
  \centering\includegraphics[width=0.27\textwidth]{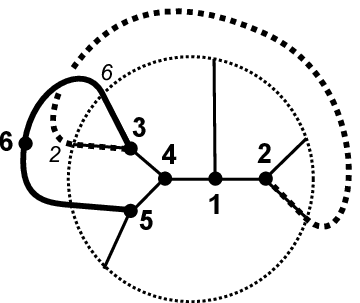}
  \qquad \includegraphics[width=0.21\textwidth]{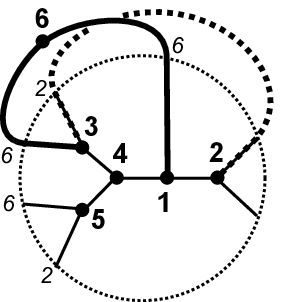}
  \caption{Case $K_{3,3}$.d.2}\label{fig:case_K33_v5_2}
 \end{figure}

Case $K_{3,3}$.d.3. The graph $\bar\Gamma^{int}_W$ contains a cycle, see fig.~\ref{fig:node_graph_K33_v5}.3).\\
In this case we should take the projections of the paths $163$ and $45$ (see fig.~\ref{fig:case_K33_v5_3}).

 \begin{figure}[!ht]
  \centering\includegraphics[width=0.25\textwidth]{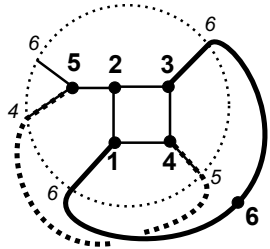}
  \caption{Case $K_{3,3}$.d.3}\label{fig:case_K33_v5_3}
 \end{figure}

Case $K_{3,3}$.e. The graph $\bar\Gamma^{int}_W$ contains $6$ vertices.\\
There are six connected subgraphs in $K_{3,3}$ with $6$ vertices which don't contain the forbidden graph (see fig.~\ref{fig:node_graph_K33_v6})

  \begin{figure}
  \centering
  \begin{tabular}{ccc}
  \includegraphics[width=0.25\textwidth]{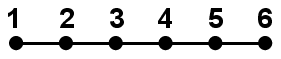} &
  \includegraphics[width=0.15\textwidth]{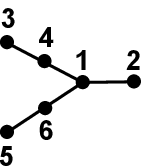} &
  \includegraphics[width=0.15\textwidth]{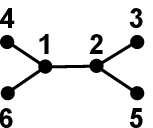}\\
  1) & 2) & 3)\\
  \includegraphics[width=0.15\textwidth]{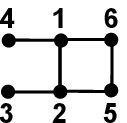} &
  \includegraphics[width=0.15\textwidth]{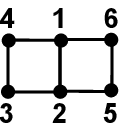} &
  \includegraphics[width=0.15\textwidth]{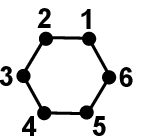}\\
  4) & 5) & 6)
  \end{tabular}
  \caption{Admissible connected subgraphs of $K_{3,3}$ with $6$ vertices}\label{fig:node_graph_K33_v6}
 \end{figure}

Case $K_{3,3}$.e.1. The graph $\bar\Gamma^{int}_W$ is a tree without vertices of degree $3$.\\
Enumerate the vertices of $\bar\Gamma^{int}_W$ as shown in fig.~\ref{fig:node_graph_K33_v6}.1).

Assume first, that the directions of the edges incident to the vertices $1$ and $6$ go in the cyclic  order  as follows: $1\to 4$, $1\to 6$, $6\to 3$, $6\to 1$. If the direction $4\to 1$ lies in the cyclic order between $1\to 6$ and $6\to 3$ then the paths $14$ and $16$ give a Vassiliev's obstruction (see fig.~\ref{fig:case_K33_v6_1}.1). The same reasonings work if the direction $3\to 6$ lies between $1\to 4$ and $6\to 1$. Hence, we can suppose that the direction $4\to 1$ lies between $1\to 4$ and $6\to 1$ and the direction $3\to 6$ lies between $1\to 6$ and $6\to 3$. Look at the position of the direction $2\to 5$. If it lies between $1\to 4$ and $6\to 1$ (see fig.~\ref{fig:case_K33_v6_1}.2) then we take the paths $14$ and $25$.  If the direction $25$ lies between $1\to 6$ and $6\to 3$ (see fig.~\ref{fig:case_K33_v6_1}.3) then we take the paths $163$ and $25$. The projections of the chosen paths form a Vassiliev's obstruction in the graph $G$.

So, let the cyclic order of the directions of the edges incident to the vertices $1$ and $6$ be $1\to 6$, $1\to 4$, $6\to 3$, $6\to 1$. If the direction $4\to 1$ lies in the cyclic order between $1\to 6$ and $6\to 1$ then the paths $14$ and $16$ give a Vassiliev's obstruction (see fig.~\ref{fig:case_K33_v6_1}.4). Analogously, we get an obstruction if the direction $3\to 6$ lies between $1\to 6$ and $6\to 1$. Hence, suppose that the directions $4\to 1$ and $3\to 6$ lie between $1\to 4$ and $6\to 6$ (see fig.~\ref{fig:case_K33_v6_1}.5). Then the paths $14$ and $36$ give a Vassiliev's obstruction.

  \begin{figure}[!ht]
  \centering
  \begin{tabular}{ccc}
  \includegraphics[width=0.25\textwidth]{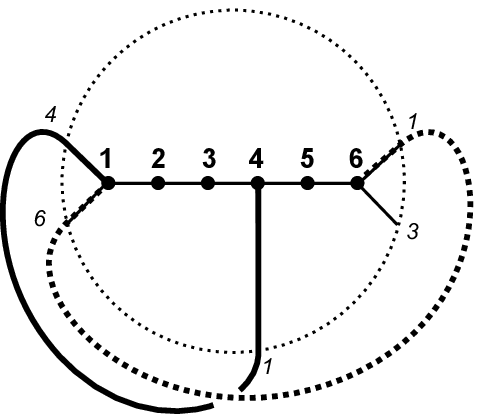} &
  \includegraphics[width=0.22\textwidth]{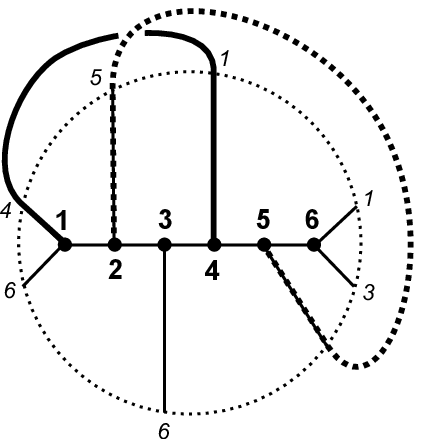} &
  \includegraphics[width=0.25\textwidth]{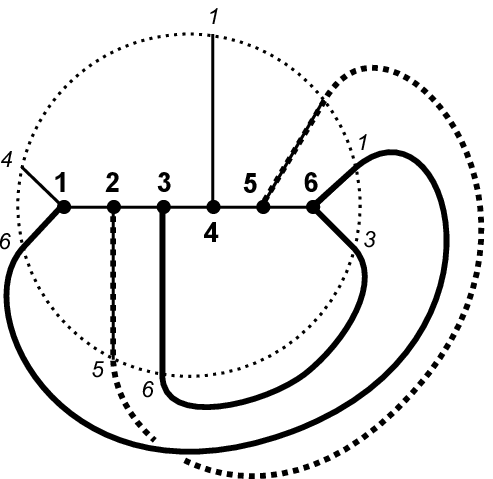}\\
  1) & 2) & 3)
  \end{tabular}\\
    \begin{tabular}{cc}
  \includegraphics[width=0.25\textwidth]{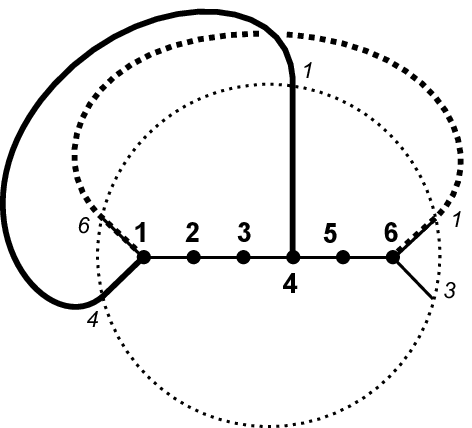} &
  \includegraphics[width=0.21\textwidth]{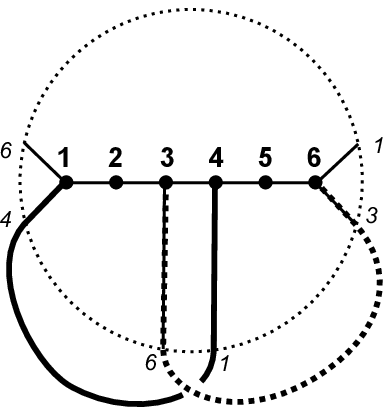}\\
  4) & 5)
  \end{tabular}
  \caption{Case $K_{3,3}$.e.1}\label{fig:case_K33_v6_1}
 \end{figure}

Case $K_{3,3}$.e.2. The graph $\bar\Gamma^{int}_W$ is a tree with one vertex of degree $3$.\\
Enumerate the vertices of $\bar\Gamma^{int}_W$ as shown in fig.~\ref{fig:node_graph_K33_v6}.2).

Let the cyclic order of the directions be as follows: $2\to 3$, $2\to 5$, directions of the vertex $3$, directions of the vertex $5$ (see fig.~\ref{fig:case_K33_v6_2}.1). Then the paths $23$ and $25$ give a Vassiliev's obstruction. Hence, later we can suppose that the order is: $2\to 5$, $2\to 3$, directions of the vertex $3$, directions of the vertex $5$.

Let the direction $3\to 6$ follow the direction $2\to 3$ in the cyclic order (see fig.~\ref{fig:case_K33_v6_2}.2). Then a Vassiliev's obstruction can be obtained from the paths $23$ and $36$. Hence, hereinafter we can suppose that the direction $3\to 2$ follows $2\to 3$ and (symmetrically) $2\to 5$ follows $5\to 2$. Thus, the cyclic order of the directions of the vertices $2$, $3$, $5$ will be $2\to 5$, $2\to 3$, $3\to 2$, $3\to 6$, $5\to 4$, $5\to 2$.

Look at the position of the direction $4\to 5$. If it lies between the directions $2\to3$ and $3\to 2$ (see fig.~\ref{fig:case_K33_v6_2}.3) then we get a Vassiliev's obstruction from the paths $23$ and $45$. Analogously, an obstruction appears when the direction $6\to 3$ lies between $2\to 5$ and $5\to 2$.

Thus, we can suppose that $4\to 5$ and $6\to 3$ lies between $3\to 6$ and $5\to 4$ (see fig.~\ref{fig:case_K33_v6_2}.4). In this case we obtain an obstruction if we take the paths $36$ and $45$.

  \begin{figure}
  \centering
  \begin{tabular}{ccc}
  \includegraphics[width=0.25\textwidth]{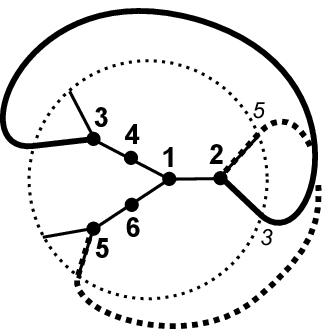} &
  \includegraphics[width=0.25\textwidth]{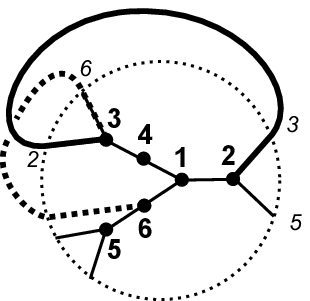}\\
  1) & 2)
  \end{tabular}\\
    \begin{tabular}{cc}
  \includegraphics[width=0.25\textwidth]{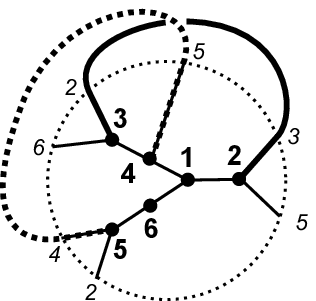} &
  \includegraphics[width=0.27\textwidth]{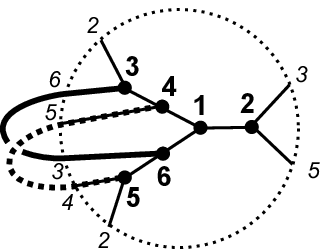}\\
  3) & 4)
  \end{tabular}
  \caption{Case $K_{3,3}$.e.2}\label{fig:case_K33_v6_2}
 \end{figure}

Case $K_{3,3}$.e.3. The graph $\bar\Gamma^{int}_W$ is a tree with two vertices of degree $3$.\\
Enumerate the vertices of $\bar\Gamma^{int}_W$ as shown in fig.~\ref{fig:node_graph_K33_v6}.3). Without loss of generality we can also suppose that the cyclic order of the directions of the vertices $3,4,5,6$ is the following: directions of the vertex $3$, directions of $4$, directions of $6$, directions of $5$ (see fig.~\ref{fig:case_K33_v6_3}). Then the paths $36$ and $45$ give a Vassiliev's obstruction.

 \begin{figure}[!ht]
  \centering\includegraphics[width=0.25\textwidth]{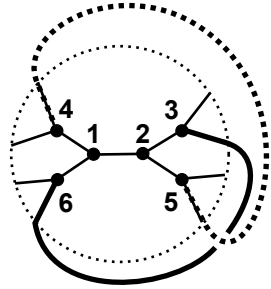}
  \caption{Case $K_{3,3}$.e.3}\label{fig:case_K33_v6_3}
 \end{figure}

Case $K_{3,3}$.e.4. The graph $\bar\Gamma^{int}_W$ contains one cycle of length $4$.\\
Enumerate the vertices of $\bar\Gamma^{int}_W$ as shown in fig.~\ref{fig:node_graph_K33_v6}.4). Then the projections of the paths $36$ and $45$ form a Vassiliev's obstruction in $G$ (see fig.~\ref{fig:case_K33_v6_4}).

 \begin{figure}[!ht]
  \centering\includegraphics[width=0.25\textwidth]{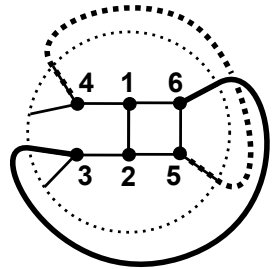}
  \caption{Case $K_{3,3}$.e.4}\label{fig:case_K33_v6_4}
 \end{figure}

Case $K_{3,3}$.e.5. The graph $\bar\Gamma^{int}_W$ contains two cycles of length $4$.\\
Enumerate the vertices of $\bar\Gamma^{int}_W$ as shown in fig.~\ref{fig:node_graph_K33_v6}.5). Then we obtain a Vassiliev's obstruction from the paths $36$ and $45$ form a Vassiliev's obstruction in $G$ (see fig.~\ref{fig:case_K33_v6_5}).

 \begin{figure}[!ht]
  \centering\includegraphics[width=0.25\textwidth]{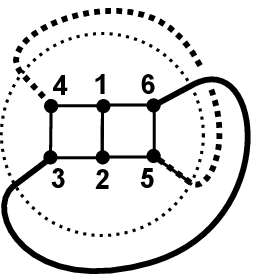}
  \caption{Case $K_{3,3}$.e.5}\label{fig:case_K33_v6_5}
 \end{figure}

Case $K_{3,3}$.e.6. The graph $\bar\Gamma^{int}_W$ is a cycle of length $4$.\\
Enumerate the vertices of $\bar\Gamma^{int}_W$ as shown in fig.~\ref{fig:node_graph_K33_v6}.6). Then a Vassiliev's obstruction in $G$ is given by the paths $14$ and $25$ (see fig.~\ref{fig:case_K33_v6_6}).

 \begin{figure}[!ht]
  \centering\includegraphics[width=0.25\textwidth]{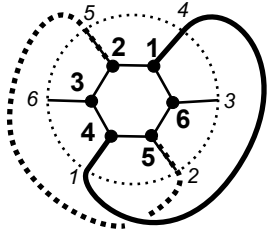}
  \caption{Case $K_{3,3}$.e.6}\label{fig:case_K33_v6_6}
 \end{figure}

Thus, the case $K_{3,3}$.e is finished.

Case $K_{3,3}$.f. Any web in $\wG$ contains no more that one vertex of $\Gamma$. This case should be treated in a different way than the previous cases.

Let $\hG=\pi(\Gamma)$ be the projection of the graph $\Gamma$ into $G$. With some abuse of notation, we denote the projections of the vertices $\Gamma$ as $i, i=1,\dots,6$, and  the projections of the edges of $\Gamma$ as $ij, 1\le i<j\le 6$. By assumptions of the case, the projections of the vertices are all different. By lemma~\ref{lem:transversal_path_projection}, the paths  $ij, 1\le i<j\le 6$, in $\hG$ don't intersect each other transversally. The degree of the vertices $i, i=1,\dots,6$, of $\hG$ are odd and the degree of any other vertex in $\hG$ is even.

Let $G'$ be the graph obtained from $G$ by deleting all the edges of the graph $\hG$. Since $G$ is even, the vertices $i, i=1,\dots,6$, are the only vertices in $G'$ of odd degree. Denote $G'_1$ to be the union of components of $G'$ which contain The vertices $1$, $3$ or $5$. If $G'$ contains neither the vertex $2$, $4$ nor $6$ then there are exactly three odd vertices in $G'_1$ that can not be true. Hence, $G'_1$ contains a vertex $2k, k=1,2$ or $3$, therefore, there is a path $\gamma$ in $G'$ which connects a vertex $2l-1, l=1,2$ or $3$. Without loss of generality, we can assume that $l=1$. The path $\gamma$ is a path in $G$ which has no common edges with the paths $ij, 1\le i<j\le 6$. We can suppose $\gamma$ has no transversal self-intersections.

\begin{nn_remark}
The proof of existence of the path $\gamma$ is the only place in the proof of the main theorem where we use the fact that $G$ is even.
\end{nn_remark}

Case $K_{3,3}$.f.1. The path $\gamma$ intersects transversally some path $ij, 1\le i<j\le 6$.\\
Let $P$ be the transversal intersection of $\gamma$ with paths $ij$ which is the nearest to the vertex $1$ on $\gamma$. If at some vertex of $G$ the path $\gamma$ intersects with paths $ij$ several times, the proximity of transversal intersections to the vertex $1$ can be determined by the vertex chord diagram. Note that the order is well defined, since there is no transversal intersections in $\hG$ and the chord, which correspond to paths $ij$ of $\hG$, don't intersect each other. Denote $\gamma_0$ to be the part of $\gamma$ between $1$ and $P$.

According to the path $ij$ the transversal intersection $P$ belong, there are two possible cases.

1. The intersection point $P$ belongs to path $1j, j=2,4$ or $6$. We will suppose $j=2$ (see fig.~\ref{fig:K33_v1_transversal_path reduction}). Let $Q$ be the last transversal intersection of $\gamma$ with the part $1P$ of the path $12$ ($Q$ can coincide with $P$) and $\gamma_1$ be the rest part of $\gamma$ from the point $Q$.Then we can perform the following reduction: we replace the path $12$ with the path $\gamma_0P2$ and replace the path $\gamma$ with the path $1Q\gamma_1$. Since $P$ is the first transversal intersection, the path $\gamma_0P2$ has no transversal intersections with other paths in $\hG$. On the other hand, since $Q$ is the last transversal intersection, the path $1Q\gamma_1$ has no transversal self-intersections and the number of transversal intersections of the path $1Q\gamma_1$ with paths in modified graph $\hG$ is less that the number of transversal intersections in the original path $\gamma$.

  \begin{figure}[!ht]
  \centering\includegraphics[width=0.7\textwidth]{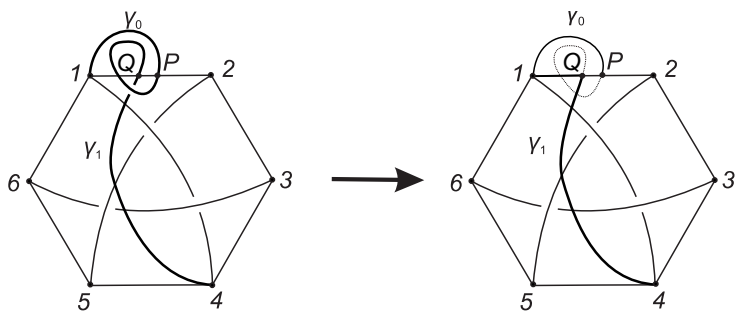}
  \caption{Case $K_{3,3}$.f.1: reduction of the path $\gamma$}\label{fig:K33_v1_transversal_path reduction}
 \end{figure}

2. The intersection point $P$ belongs to path $ij$ such that $i,j>1$. We can assume that $i=3$, so $j=2,4$ or $6$. Consider the restriction of the cyclic order at the vertex $1$ to the paths $12$, $14$, $16$ and $\gamma$. If $\gamma$ and $1j$ are not adjacent in the restricted cyclic order (see fig.~\ref{fig:K33_v1_transversal_path_cycles} left) then the cycles $\gamma_0P341$ and $12561$ form a Vassiliev's obstruciton (we assume here $j=2$). If $\gamma$ and $1j$ are adjacent in the restricted cyclic order (see fig.~\ref{fig:K33_v1_transversal_path_cycles} right) then the cycles $\gamma_0P41$ and $12561$ form a Vassiliev's obstruciton (we assume here $j=4$).

  \begin{figure}[!ht]
  \centering\includegraphics[width=0.25\textwidth]{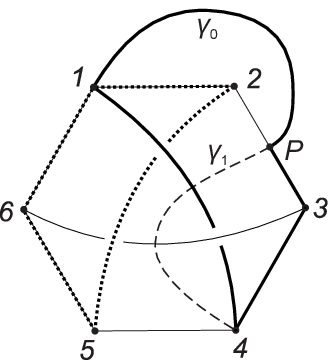}
  \quad \includegraphics[width=0.25\textwidth]{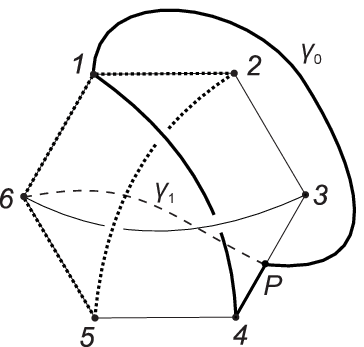}
  \caption{Case $K_{3,3}$.f.1: transversal cycles}\label{fig:K33_v1_transversal_path_cycles}
 \end{figure}

Case $K_{3,3}$.f.2. The path $\gamma$ does not have transversal intersections with paths $ij, 1\le i<j\le 6$.\\
We can suppose that the ends of $\gamma$ are $1$ and $2$. Consider the cyclic order at the vertex $1$ restricted to the paths $12$, $14$, $16$ and $\gamma$ and the cyclic order at the vertex $2$ restricted to the paths $21$, $23$, $25$ and $\gamma$.

 Let the paths $\gamma$ and $12$ be not adjacent in the cyclic order either at the vertex $1$ or at $2$ (for instance, at $1$), see fig.~\ref{fig:case_K33_v1} left. Then the cycles $\gamma21$ and $14561$ have one transversal intersection at the vertex $1$ and form a Vassiliev's obstruciton.

  Let the paths $\gamma$ and $12$ be adjacent in the cyclic orders at the vertex $1$ and at $2$ (see fig.~\ref{fig:case_K33_v1} right). Then the cycles $\gamma2561$ and $12341$ have a transversal intersection at the vertex $2$.

  \begin{figure}[!ht]
  \centering\includegraphics[width=0.25\textwidth]{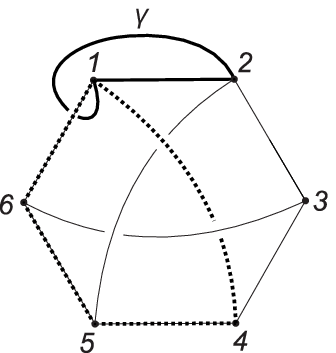}
  \quad \includegraphics[width=0.25\textwidth]{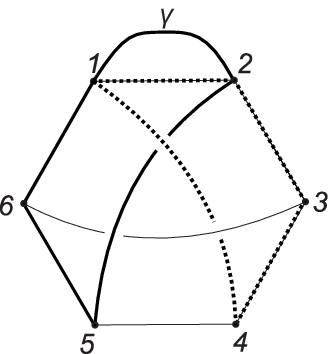}
  \caption{Case $K_{3,3}$.f.2: $\gamma$ has no transversal intersections}\label{fig:case_K33_v1}
 \end{figure}

Thus, theorem~\ref{thm:main_theorem} is proved.
%\hfill $\Box$

%\end{proofof}% of Theorem~\ref{thm:main_theorem}

%\begin{proofof}{Theorem~\ref{thm:main_theorem2}}
\noindent{\bf Proof of Theorem~\ref{thm:main_theorem2}.}
According to the proof of theorem~\ref{thm:main_theorem} we need only consider the case
when the graph $K_{3,3}$ can be immersed into the \st-graph $G$ such that it has no
transversal self-intersection points (case $K_{3,3}$.f in the roof above) and sends the
vertices of $K_{3,3}$ to different vertices of $G$. Let $\Gamma$ be the image of the
immersion. Denote the images of the vertices of $K_{3,3}$ as $1,2,3,4,5,6$ so that the
vertices of different parity corresponds to adjacent vertices of $K_{3,3}$. Let $ij$
denote the image of the edge which connects the vertices $i$ and $j$ in $K_{3,3}$.

If the subgraph $\Gamma$ has no self-intersections then it is homeomorphic to $K_{3,3}$.
So, let us assume $\Gamma$ has a self-intersection point $P$. We shall show below that
either $\Gamma$ contains a pair of transversal cycles or it can be reduced to an
immersion $K_{3,3}$ that contains less number of
edges of the graph $G$.

Case a. There is a vertex of $\Gamma$ at the intersection point $P$.\\
Denote $a$ to be the vertex at $P$ and let $i,j,k$ be the adjacent vertices to $a$. Since $P$ is an intersection point there is an edge $pq$ that goes through $P$. If $P$ is a multiple intersection we take the edge whose arc in the vertex chord diagram is not separated from the vertex $a$ (see fig.~\ref{fig:graph_K33_reduction_chord_diagram_v}).

  \begin{figure}[!ht]
  \centering\includegraphics[width=0.2\textwidth]{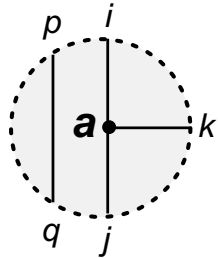}
  \caption{Case a: there is a vertex at the intersection point}\label{fig:graph_K33_reduction_chord_diagram_v}
 \end{figure}

We can suppose that $a=1$ and $\{i,j,k\}=\{2,4,6\}$ where the direction $14$ is separated from the arc $pq$ with the directions $12$ and $16$ in the cyclic order at the point $P$.

Case a.1. $p=1$ or $q=1$.\\
In this case we can reduce the subgraph $\Gamma$. Assume that $p=1$. The point $P$ splits the path $1q$ into paths $\gamma_1$ and $\gamma_2$. Then we can replace the path $1q$ with the path $\gamma_2$ and get an immersion of the graph $K_{3,3}$ which has less number of edges of $G$ (see fig.~\ref{fig:graph_K33_reduction_v_1i}).

  \begin{figure}[!ht]
  \centering\includegraphics[width=0.5\textwidth]{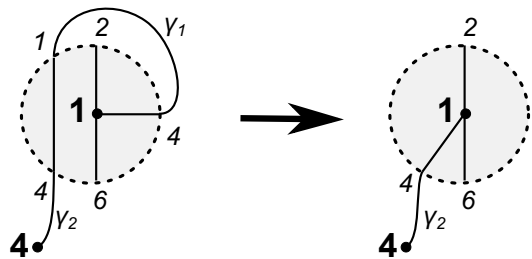}
  \caption{Case a.1: $1\in\{p,q\}$}\label{fig:graph_K33_reduction_v_1i}
 \end{figure}

Case a.2. The edge $pq$ is not incident to the vertex $1$ and $p,q\ne 4$.\\
We can assume that $\{p,q\}=\{2,3\}$. The point $P$ splits the edge $pq$ into subpaths $\gamma_1$ and $\gamma_2$.

If the direction of the subpath incident to the vertex $2$ follows the direction $12$ in the cyclic order at $P$ (see fig.~\ref{fig:graph_K33_reduction_v_23} left) then the cycles $143\gamma_2$ and $1652\gamma_1$ have a unique transversal intersection at the point $P$.

If the direction of the subpath incident to the vertex $2$ does not follow the direction $12$ in the cyclic order at $P$ (see fig.~\ref{fig:graph_K33_reduction_v_23} right) then the cycles $12\gamma_2$ and $163\gamma_1$ form a Vassiliev's obstruction.

  \begin{figure}[!ht]
  \centering\includegraphics[width=0.27\textwidth]{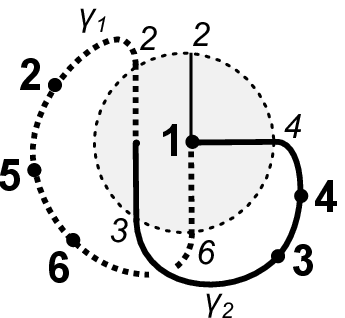}\qquad
  \includegraphics[width=0.25\textwidth]{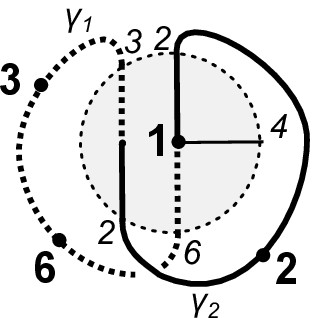}
  \caption{Case a.2: vertex $4$ is not incident to the edge $pq$}\label{fig:graph_K33_reduction_v_23}
 \end{figure}

Case a.3. The edge $pq$ is not incident to the vertex $1$ and $\{p,q\}\ni 4$.\\
We can assume that $\{p,q\}=\{3,4\}$. The point $P$ splits the edge $34$ into subpaths $\gamma_1$ and $\gamma_2$. Without loss of generality we can assume the direction of the subpath incident to the vertex $3$ follows the direction $12$ in the cyclic order at $P$ (see fig.~\ref{fig:graph_K33_reduction_v_34}.Then the cycles $14\gamma_2$ and $163\gamma_1$ are transversal.

  \begin{figure}[!ht]
  \centering\includegraphics[width=0.25\textwidth]{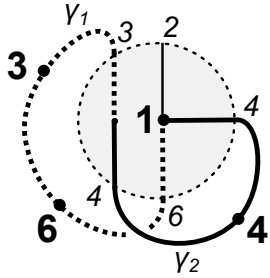}
  \caption{Case a.3: vertex $4$ is incident to the edge $pq$}\label{fig:graph_K33_reduction_v_34}
 \end{figure}

Case b. There is no vertices of $\Gamma$ at the intersection point $P$.\\
Since $P$ is an intersection point there are edges $ij$ and $kl$ which go through $P$. Without loss of generality, we can assume that the arcs, which correspond to the edges in the vertex chord diagram, are not separated from each other (see fig.~\ref{fig:graph_K33_reduction_chord_diagram}). We suppose that $i=1$, $j=2$

  \begin{figure}[!ht]
  \centering\includegraphics[width=0.15\textwidth]{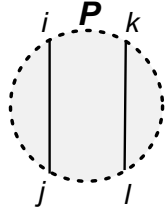}
  \caption{Case b: there is no vertices at the intersection point}\label{fig:graph_K33_reduction_chord_diagram}
 \end{figure}

Case b.1. The edges $ij$ and $kl$ coincide, i.e. $\{i,j\}=\{k,l\}=\{1,2\}$.\\
There are two possibilities.

Case b.1.1. The arcs $ij$ and $kl$ have different orientations ($k=2,l=1$).
The arcs split the edge $12$ into three paths $\gamma_1,\gamma_2,\gamma_3$ where $\gamma_1$ contains the vertex $1$ and $\gamma_3$ contains the vertex $3$. Then we can replace the path $12$ with the path $\gamma_1\cup\gamma_3$ and get an immersion of the graph $K_{3,3}$ which has less number of edges of $G$ (see fig.~\ref{fig:graph_K33_reduction_1212}).

  \begin{figure}[!ht]
  \centering\includegraphics[width=0.7\textwidth]{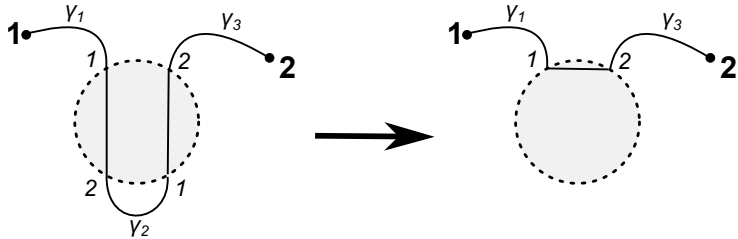}
  \caption{Case b.1.1: arcs belong to one edge and have opposite orientations}\label{fig:graph_K33_reduction_1212}
 \end{figure}

Case b.1.2. The arcs $ij$ and $kl$ have the same orientation ($k=2,l=1$).
The point $P$ splits the edge $12$ into paths $\gamma_1,\gamma_2,\gamma_3$. Then the cycles $\gamma_2$ and $\gamma_1\gamma_3 2341$ form a Vassiliev's obstruction (see fig.~\ref{fig:graph_K33_reduction_1221}).

  \begin{figure}[!ht]
  \centering\includegraphics[width=0.25\textwidth]{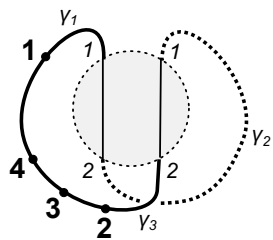}
  \caption{Case b.1.2: arcs belong to one edge and have the same orientation}\label{fig:graph_K33_reduction_1221}
 \end{figure}

Case b.2. The edges $ij$ and $kl$ are adjacent, i.e. $\{i,j\}\ne\{k,l\}$ and $\{i,j\}\cap\{k,l\}\ne\emptyset$.\\
We can suppose that $\{k,l\}=\{1,4\}$. The point $P$ splits the edge $12$ into subpaths $\gamma_1$ and $\gamma_1'$ and splits the edge $14$ into subpaths $\gamma_2$ and $\gamma_2'$, where $\gamma_1$ and $\gamma_2$ contain the vertex $1$.

Case b.2.1. The arcs $ij$ and $kl$ have different orientations ($k=4,l=1$).
Then the cycles $\gamma_1\gamma_2$ and $\gamma_1'254\gamma_2'$ have a unique transversal intersection at the point $P$ (see fig.~\ref{fig:graph_K33_reduction_1214}).

  \begin{figure}[!ht]
  \centering\includegraphics[width=0.25\textwidth]{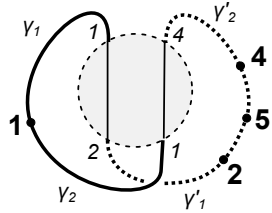}
  \caption{Case b.2.1: arcs belong to adjacent edges and have opposite orientations}\label{fig:graph_K33_reduction_1214}
 \end{figure}

Case b.2.2. The arcs $ij$ and $kl$ have different orientations ($k=1,l=4$).
In this case we can move the vertex $1$ to the point $P$ and replace the edge $12$ with the path $\gamma_1'$, the edge $14$ with the path $\gamma_2'$ and the edge $16$ with the path $\gamma_1 16$ (see fig.~\ref{fig:graph_K33_reduction_1241}). Thus we get a new immersion of the graph $K_{3,3}$ which has less number of edges of $G$.

  \begin{figure}[!ht]
  \centering\includegraphics[width=0.7\textwidth]{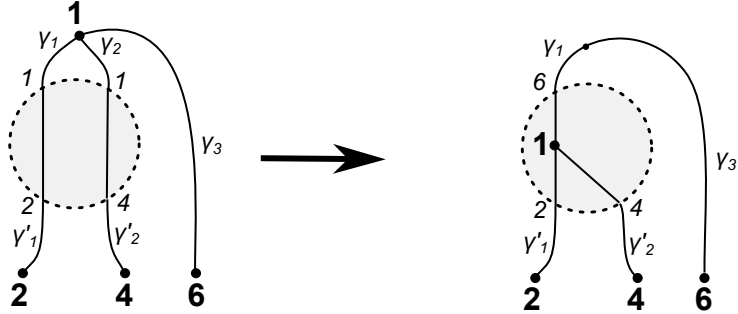}
  \caption{Case b.2.2: arcs belong to adjacent edges and have the same orientation}\label{fig:graph_K33_reduction_1241}
 \end{figure}

Case b.3. The edges $ij$ and $kl$ are not adjacent, i.e. $\{i,j\}\cap\{k,l\}=\emptyset$.\\
We can suppose that $\{k,l\}=\{3,4\}$. The point $P$ splits the edge $12$ into subpaths $\gamma_1$ and $\gamma_1'$ and splits the edge $34$ into subpaths $\gamma_2$ and $\gamma_2'$, where $\gamma_1$ contains the vertex $1$ and $\gamma_2$ contains the vertex $3$.

Case b.3.1. The arcs $12$ and $34$ have different orientations ($k=4,l=3$).
In this case the cycles $\gamma_1\gamma_2 361$ and $\gamma_1'\gamma_2' 452$ have a unique transversal intersection at the point $P$ (see fig.~\ref{fig:graph_K33_reduction_1234} left).

Case b.3.2. The arcs $12$ and $34$ have the same orientation ($k=3,l=4$).
Then the cycles $\gamma_1\gamma_2' 41$ and $\gamma_1'\gamma_2 32$ form a Vassiliev's obstruction (see fig.~\ref{fig:graph_K33_reduction_1234} right).

  \begin{figure}[!ht]
  \centering\includegraphics[width=0.28\textwidth]{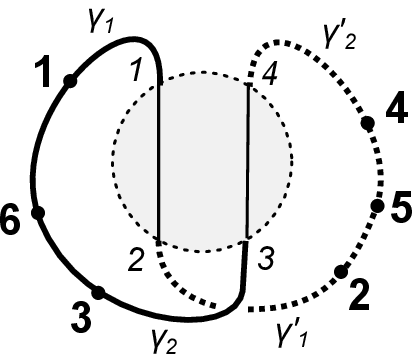}\qquad
  \includegraphics[width=0.25\textwidth]{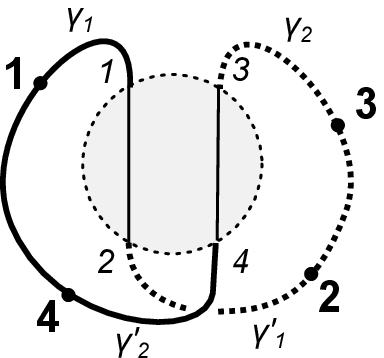}
  \caption{Case b.3: arcs belong to nonadjacent edges}\label{fig:graph_K33_reduction_1234}
 \end{figure}

Thus, if the graph $\Gamma$ contains a self-intersecton point, then
either $\Gamma$ contains a pair of transversal cycles or it can be reduced to an
immersion $K_{3,3}$ which contains less number of
edges of the graph $G$. Therefore, any immersion of $K_{3,3}$ into $G$ without transversal self-intersections contains a pair of transversal cycles or can be reduced to an embedding of $K_{3,3}$.
%\hfill $\Box$
%\end{proofof}% Theorem~\ref{thm:main_theorem2}

\section*{Acknowledgments}
The author is grateful to V.\,O.~Manturov and  D.\,P.~Ilyutko for their interest to this work. The author was partially supported by grants RFBR 13-01-00830-a and 14-01-31288-mol-a, and grant of RF President NSh -- 1410.2012.1.

\end{document}